\documentclass[12pt]{amsart}
\usepackage{amsmath}
\usepackage{amssymb}
\usepackage{nccmath}
\usepackage{tikz}
\usepackage[all]{xy}
\usepackage{longtable}
\allowdisplaybreaks[1]
\setlength{\topmargin}{0truein}
\setlength{\headheight}{.25truein}
\setlength{\headsep}{.25truein}
\setlength{\textheight}{9truein}
\setlength{\footskip}{.25truein}
\setlength{\oddsidemargin}{0truein}
\setlength{\evensidemargin}{0truein}
\setlength{\textwidth}{6.5truein}
\setlength{\voffset}{-0.25truein}
\setlength{\hoffset}{-0.0975truein}

\newtheorem{theorem}{Theorem}
\newtheorem{lemma}{Lemma}
\newtheorem{proposition}{Proposition}
\newtheorem{corollary}{Corollary}
\newtheorem{conjecture}{Conjecture}

\theoremstyle{definition}

\theoremstyle{remark}
\newtheorem{remark}{Remark}

%\DeclareMathAlphabet{\mathpzc}{OT1}{pzc}{m}{n}
\DeclareMathAlphabet{\matheur}{U}{eur}{m}{n}

%unit circle

%differential
%Exponential
%\newcommand{\mod}{\mathrm{mod}}%Modulo
%\newcommand{\binom}[2]{{#1\choose#2}} %Binomial

%Residue
%\newcommand{\qed}{$\Box$}

 %Polylogarithm
\newcommand{\Km}{\mathrm{Km}}
 %Hyperlogarithm
 % Real part
 % Imaginary part
 %Imaginary unit
 %L-function
\newcommand{\T}{\mathrm{T}}
\newcommand{\Sym}{\mathrm{Sym}}

\DeclareMathOperator{\Gal}{Gal}

\newmuskip\pFqskip
\pFqskip=6mu
\mathchardef\pFcomma=\mathcode`, % keep a copy of the comma

\renewcommand{\Re}{\operatorname{Re}}

\begin{document}

\title{Feynman integrals and critical modular $L$-values}

\author{Detchat Samart}
\address{Department of Mathematics, University of Illinois at Urbana-Champaign, Urbana,
IL 61801, USA} \email{dsamart@illinois.edu}
%    \thanks will become a 1st page footnote.
%\thanks{}

%    General info
\subjclass[2010]{11F67 (primary), 81Q30 (secondary)}

\date{\today}

\maketitle
\begin{abstract}
Broadhurst \cite{Bro} conjectured that the Feynman integral associated to the polynomial corresponding to $t=1$ in the one-parameter family $(1+x_1+x_2+x_3)(1+x_1^{-1}+x_2^{-1}+x_3^{-1})-t$ is expressible in terms of $L(f,2),$ where $f$ is a cusp form of weight $3$ and level $15$. Bloch, Kerr and Vanhove \cite{BKV} have recently proved that the conjecture holds up to a rational factor. In this paper, we prove that Broadhurst's conjecture is true. Similar identities involving Feynman integrals associated to other polynomials in the same family are also established.
\end{abstract}
\section{Introduction}
In this article, we consider the evaluation of the integral
\begin{equation*}
I(t):=\int_{x_1,x_2,x_3\ge 0} \frac{1}{(1+x_1+x_2+x_3)(1+x_1^{-1}+x_2^{-1}+x_3^{-1})-t}\,\frac{d x_1}{x_1}\frac{d x_2}{x_2}\frac{d x_3}{x_3},
\end{equation*}
which is known as the Feynman integral associated to the three-banana graph. The zero loci of the polynomials $P_t(x_1,x_2,x_3):=(1+x_1+x_2+x_3)(1+x_1^{-1}+x_2^{-1}+x_3^{-1})-t$ constitute a one-parameter family $X_t$ of $K3$ surfaces of generic Picard number $19$. Moreover, there are countably many values of $t$ for which $X_t$ has Picard number $20$, in which case it is said to be \textit{singular}. A singular $K3$ surface over $\mathbb{Q}$ is known to be modular in the sense that its Hasse-Weil $L$-function coincides with the $L$-function attached to a weight $3$ cusp form \cite{Liv}. Examples of these values include $t=-32, -2, 1, 4,$ and $16.$ In general, one can determine the values of $t$ which make $X_t$ singular by checking whether an elliptic curve $E_t$ arising naturally in the corresponding Shioda-Inose structure has complex multiplication. (See Section~\ref{S:SI} for more details.) \\
\indent There has been evidence suggesting that some special values of $I(t)$ can be written in terms of interesting arithmetic quantities like zeta values and special values of modular $L$-functions, presumably corresponding to the $K3$ surfaces $X_t$. This phenomenon has been well predicted by Deligne's conjecture on critical values of $L$-functions \cite{Deligne}. When $t=0,$ the variety $X_t$ is obviously reducible, and it was proved in \cite{BBBG,Bro,BKV} that $I(0)=7\zeta(3).$ More intriguingly, Broadhurst \cite{Bro} verified numerically using high-precision computations that
\begin{equation}\label{E:Bro}
I(1)=\frac{12\pi}{\sqrt{15}}L(f,2),
\end{equation}
where $f$ is the weight $3$ cusp form of level $15$ whose $q$-expansion is given by
\begin{align*}f(q)&=\eta(\tau)\eta(3\tau)\eta(5\tau)\eta(15\tau)\sum_{m,n\in\mathbb{Z}}q^{m^2+mn+4n^2}\\ &=\eta^3(3\tau)\eta^3(5\tau)+\eta^3(\tau)\eta^3(15\tau),
\end{align*}
and $L(f,s)$ is the $L$-function associated to $f.$ Here and throughout $\eta(\tau)$ denotes the usual Dedekind eta function
\[\eta(\tau)=q^{1/24}\prod_{n=1}^\infty (1-q^n),\]
where $q=e^{2\pi i \tau}.$ Bloch, Kerr and Vanhove \cite{BKV} have recently proved that Broadhurst's conjecture is true up to a rational coefficient by realizing $I(1)$ and $L(f,2)$ as periods associated to the differential $2$-form on $X_1$ and showing that the underlying regulator is trivial. Their proof involves a special case of Deligne's conjecture for critical $L$-values, which has been proved by Blasius \cite{Bla}. The main goal of this paper is to prove that \eqref{E:Bro} is true.
\begin{theorem}\label{T:main}
Let $I(t)$ and $f$ be defined as above. Then we have
\[I(1)=\frac{12\pi}{\sqrt{15}}L(f,2).\]
\end{theorem}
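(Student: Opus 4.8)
The plan is to realize $I(1)$ as an Eichler-type integral of the weight-$3$ form $f$ and then to invoke the classical integral representation of the completed $L$-function $\Lambda(f,2)$. The first step is to recall, following Vanhove and Bloch--Kerr--Vanhove, that $I(t)$ is \emph{not} itself a period of $X_t$: expanding the Feynman integrand as a power series near $t=0$ and computing the creative telescoping of the resulting diagonal sum shows that $I(t)$ satisfies an \emph{inhomogeneous} Picard--Fuchs equation $\mathcal{L}\,I(t)=g(t)$ with $g$ an explicit rational function, where the homogeneous operator $\mathcal{L}$ is the symmetric square $\Sym^2 L_2$ of the order-$2$ Picard--Fuchs operator $L_2$ attached to the elliptic curve $E_t$ occurring in the Shioda--Inose structure of $X_t$ (Section~\ref{S:SI}). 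Variation of parameters then expresses $I(t)$, up to a $\QQ$-linear combination of $y_0(t)^2$ and $y_0(t)$ times a second solution of $L_2$, as a genuine iterated integral of $y_0(t)^2$ against $g(t)$, where $y_0$ is the holomorphic solution of $L_2$ at $t=0$.

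The second step is to specialize at $t=1$ using a modular parametrization of $E_t$. Since $X_1$ is a singular $K3$ surface whose transcendental lattice has discriminant $15$, the elliptic curve $E_1$ has complex multiplication, the associated weight-$2$ eigenform is a CM form, and its suitably twisted symmetric square is precisely the weight-$3$, level-$15$ newform $f$ of the theorem. Introducing the Hauptmodul $t=t(\tau)$ for the relevant genus-zero group and writing $y_0(t(\tau))=Z(\tau)$ for the corresponding weight-$1$ form, one has $y_0^2=Z^2$, and the identity to establish is that, up to an explicit constant and a power of $\d t/\d\tau$, the combination governing $I$ coincides with $f(\tau)$. The normalizations are pinned down once and for all by the eta-quotient expression $f=\eta^3(3\tau)\eta^3(5\tau)+\eta^3(\tau)\eta^3(15\tau)$ recorded above, together with the explicit $q$-expansions of $t(\tau)$ and of $y_0$.

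The third step is to carry out the change of variables $t\mapsto\tau$ in the iterated-integral representation of $I(1)$. This turns $I(1)$, up to an explicit algebraic constant, into $\int_0^{i\infty} f(\tau)\,\tau\,\d\tau$, the rational and quasi-period contributions (the values of $y_0^2$ and of $y_0$ times the second solution at $t=1$, together with the would-be $L(f,1)$ term of the period polynomial) dropping out by parity and by the degeneration of the iterated integral at the CM point. Finally, the completed $L$-function of the weight-$3$ form satisfies $\Lambda(f,2)=(2\pi)^{-2}\Gamma(2)L(f,2)=\int_0^\infty f(iy)\,y\,\d y$, i.e. $L(f,2)=4\pi^2\int_0^\infty f(iy)\,y\,\d y$; combining this with the previous step yields $I(1)=\tfrac{12\pi}{\sqrt{15}}L(f,2)$.

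The main obstacle is the exact constant $\tfrac{12\pi}{\sqrt{15}}$: unlike the approach of Bloch--Kerr--Vanhove, which leaves a rational ambiguity absorbed into Deligne's conjecture, here one must track \emph{every} normalization --- the Feynman integrand versus the holomorphic $2$-form on $X_t$, the inhomogeneous term $g(t)$, the Hauptmodul and modular-form normalizations, the relation between the weight-$2$ form of $E_1$ and $f$, and the Jacobian $\d t/\d\tau$ --- through the Shioda--Inose transfer and the passage to $\Sym^2$. The $\sqrt{15}$ enters through the discriminant of the CM field (equivalently the level), while $12\pi$ assembles from the powers of $2\pi$ in the period--$L$-value dictionary and small integers coming from the index and cusp-width data of the parametrizing group; verifying that these combine to give precisely $\tfrac{12\pi}{\sqrt{15}}$, rather than merely a commensurable quantity, is the crux. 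A secondary technical point is justifying the contour manipulation and the interchange of summation and integration at the cusp where the iterated integral is evaluated, which uses the rapid decay of $f$ and control of $I(t)$ and its Picard--Fuchs solutions as $t$ approaches the corresponding singular value.
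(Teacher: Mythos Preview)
Your outline is not the route the paper takes, and as written it has a genuine gap at the step where you assert that the change of variables $t\mapsto\tau$ turns $I(1)$ ``up to an explicit algebraic constant'' into $\int_0^{i\infty} f(\tau)\,\tau\,\d\tau$. Two structural obstructions are being swept under the rug. First, the Hauptmodul $t(\tau)=-\bigl(\eta(\tau)\eta(3\tau)/\eta(2\tau)\eta(6\tau)\bigr)^6$ and the weight-one form $y_0=Z(\tau)$ live on a $\Gamma_0(6)$-type group, whereas $f$ has level $15$; you never explain how the level-$15$ newform is produced from the level-$6$ data beyond saying it is the ``suitably twisted symmetric square'', and without that identification in hand the integrand is not $f$. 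Second, the preimage of $t=1$ is the CM point $\tau_1=(-3+\sqrt{-15})/24$, not a cusp, so the iterated integral obtained by variation of parameters runs from an interior point of $\mathfrak{H}$, not from $0$ to $i\infty$; the claim that the boundary and quasi-period contributions ``drop out by parity and by the degeneration of the iterated integral at the CM point'' is not a proof. You yourself flag the exact constant $12\pi/\sqrt{15}$ as the crux and then do not compute it, so what you have is a plausible program rather than a proof.

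The paper proceeds quite differently and avoids ever writing $I(1)$ as a Mellin transform of $f$. Starting from the BKV lattice-sum expression for $I(t(\tau))$ at $\tau_1$, it reduces $I(1)$ to the closed form $I(1)=-\dfrac{(2\pi i)^3}{8\sqrt{-15}}\,\varpi_2(\tau_2)$ with $\tau_2=(3+\sqrt{-15})/6$ and $\varpi_2=(\eta(\tau)\eta(3\tau))^4/(\eta(2\tau)\eta(6\tau))^2$. The nontrivial input for that reduction is the lattice identity
\[
\sum_{m,n\ge 1}\frac{\psi(n)}{n^2}\Bigl(\frac{1}{24m^2-6mn+n^2}+\frac{1}{24m^2+6mn+n^2}\Bigr)=11\,\zeta(4),
\]
which the paper proves by Poisson summation together with an explicit evaluation of a combination of Eichler integrals $F_3$ of $E_4$ at CM arguments in $\QQ(\sqrt{-15})$ (Lemma~\ref{L:F3}). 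The eta-product $\varpi_2(\tau_2)$ is then evaluated as an explicit $\Gamma$-product via Chowla--Selberg-type formulas (Lemma~\ref{L:eta}), and finally matched to $L(f,2)$ through the Rogers--Wan--Zucker evaluation $L(f,2)=\Gamma(\tfrac{1}{15})\Gamma(\tfrac{2}{15})\Gamma(\tfrac{4}{15})\Gamma(\tfrac{8}{15})/(120\sqrt{3}\,\pi)$. In other words, the paper factors the identity through a $\Gamma$-value rather than through an Eichler integral of $f$, and every constant is pinned down by those two external inputs. If you want to salvage your approach you would need, at minimum, an explicit identity expressing $f$ in terms of the $\Gamma_0(6)$ data and a careful treatment of the contour from the CM point, neither of which is supplied.
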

In addition, we obtain some new identities involving $I(-32), I(-2), I(4),$ and $I(16).$
\begin{theorem}\label{T:main2}
Let $g$ be the weight $3$ cusp form of level $12$ defined by $g(q)=\eta^3(2\tau)\eta^3(6\tau).$ Then we have
\begin{align}
I(-32)+I(16) &= \frac{36 \pi}{\sqrt{12}}L(g,2), \label{E:I2}\\
I(16) = 2I(4)&= 8(I(-2)-I(-32)). \label{E:I3}
\end{align}
\end{theorem}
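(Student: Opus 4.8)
The plan is to carry over, to the level-$12$ form $g$ and the parameters $t\in\{-32,-2,4,16\}$, the machinery that establishes Theorem~\ref{T:main}. Recall from the analysis behind that theorem that the Picard--Fuchs operator of the family $X_t$ is the symmetric square of the second-order operator governing the elliptic curve $E_t$ in the Shioda--Inose structure of $X_t$, so that $I(t)$ can be written, near the point of maximal unipotent monodromy, as an explicit quadratic expression in the periods of $E_t$; equivalently, $I(t)$ equals, up to an explicit algebraic factor and a power of $\pi$, a period of the transcendental part of $H^2(X_t)$. When $X_t$ is singular, $E_t$ has complex multiplication, the associated weight-$3$ cusp form is a CM form, and $I(t)$ becomes a $\overline{\QQ}$-multiple of $\Omega_K^2$, with $K$ the CM field and $\Omega_K$ the attached Chowla--Selberg period. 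So the first step is to verify, via the recipe recalled in Section~\ref{S:SI}, that $E_{-32},E_{-2},E_4,E_{16}$ all have CM by $K=\QQ(\sqrt{-3})$ -- the field underlying $g=\eta^3(2\tau)\eta^3(6\tau)$, whereas $E_1$ has CM by $\QQ(\sqrt{-15})$, which is why $I(1)$ is not commensurable with these four -- and to record the conductors of the relevant CM orders.

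With that in place, \eqref{E:I2} is obtained by rerunning the endgame of Theorem~\ref{T:main}: realize $I(-32)$ and $I(16)$ as periods of the weight-$3$ motives of $X_{-32}$ and $X_{16}$, apply Deligne's conjecture on critical $L$-values for CM motives -- known by Blasius \cite{Bla}, and in the CM case going back to Shimura and Damerell -- to identify each of these periods with a $\overline{\QQ}$-multiple of $\tfrac{\sqrt{12}}{\pi}L(g,2)$, and compute the two coefficients, whose sum should be $36$. The coefficients are forced to be rational rather than merely algebraic because all four curves share the field $K$, so all the periods in play lie on a single $\overline{\QQ}$-line; a brief descent, or a high-precision numerical comparison against an a priori bound on the denominator, removes the residual ambiguity, exactly as in upgrading the Bloch--Kerr--Vanhove result \cite{BKV} to Theorem~\ref{T:main}.

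The relations \eqref{E:I3} are then a matter of period ratios among the four CM elliptic curves. They can be proved either by computing each of $I(16),I(4),I(-2),I(-32)$ separately as a rational multiple of $\tfrac{\pi}{\sqrt{12}}L(g,2)$ by the same method and reading off the coefficients, or -- more efficiently -- by exhibiting the isogenies linking these curves: these descend from the Hecke and Atkin--Lehner correspondences on the modular curve parametrizing the family, translate through the Shioda--Inose construction into rational maps among the $X_t$, and scale the holomorphic periods by explicit algebraic numbers. Tracking those scalings, a $2$-isogeny between $E_{16}$ and $E_4$ yields $I(16)=2I(4)$, while the appearance of the difference $I(-2)-I(-32)$ (rather than the two terms separately) reflects the cancellation of contributions present in $I(-2)$ and $I(-32)$ individually; the constant $8$ encodes the degree of the remaining correspondence. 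As an independent check, \eqref{E:I3} should also follow from a quadratic transformation of the hypergeometric series that represents $I(t)$ near $t=0$.

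The main obstacle throughout is the exact determination of the rational constants. Deligne's conjecture together with the Chowla--Selberg formula pins down \eqref{E:I2} only up to $\QQ^\times$, so producing the precise value $36/\sqrt{12}$ requires either careful bookkeeping of the Hecke character of $g$ and of the normalizations hidden in the symmetric-square period map, or enough numerical precision to force the coefficient once its denominator has been bounded; and for \eqref{E:I3} one must get the isogeny degrees, their orientations, and the cancelling contributions exactly right, so that the answers come out as the clean integers $2$ and $8$ rather than merely as the algebraic numbers expected a priori.
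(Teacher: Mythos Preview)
Your proposal essentially reproduces the Bloch--Kerr--Vanhove strategy that the paper is written to \emph{sharpen}: realizing $I(t)$ as a period of the weight-$3$ motive and invoking Blasius's theorem yields each $I(t)$ only as an unspecified element of $\overline{\QQ}^\times\cdot \tfrac{\pi}{\sqrt{12}}L(g,2)$. You then propose to pin down the constants either by ``careful bookkeeping'' of Hecke-character normalizations or by numerics against an a priori denominator bound. Neither is carried out, and the second is not rigorous without an actual bound, which Deligne's conjecture does not supply. This is precisely the gap left by \cite{BKV} for $t=1$, and your outline does not close it here either. The isogeny argument you sketch for \eqref{E:I3} has the same defect: isogenies rescale the holomorphic period of $E_t$ by an explicit algebraic number, but $I(t)$ is an \emph{inhomogeneous} solution of $\mathcal{L}^3_t$, not a pure period, so the passage from period ratios to ratios of $I(t)$-values is not the clean functoriality you describe.

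The paper's proof is entirely different and is a direct computation. One uses the modular parametrization $t(\tau)=-\bigl(\eta(\tau)\eta(3\tau)/\eta(2\tau)\eta(6\tau)\bigr)^6$ and the explicit formula \eqref{E:Imod} for $I(t(\tau))$ from \cite{BKV}, locates the four CM points $\tau$ with $t(\tau)\in\{-32,-2,4,16\}$ in $\QQ(\sqrt{-3})$, and then reduces the resulting lattice sums, via \eqref{E:Q} and Poisson summation, to values of the Eichler integral $F_3$ at CM arguments. Lemma~\ref{L:F3} evaluates those $F_3$-values in closed form; this is the analytic heart of the argument and is what produces the exact coefficients. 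The prefactor $\varpi_2(\tau_4)$ is then computed from the eta and Weber evaluations of Lemma~\ref{L:eta}, and matched to $L(g,2)$ by the Rogers--Wan--Zucker formula \cite[Thm.~5]{RWZ}. No appeal to Deligne/Blasius is made, and no step is ``up to $\QQ^\times$''.
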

%\begin{corollary}
%The following identities are true:
%\end{corollary}
The proofs of Theorem~\ref{T:main} and Theorem~\ref{T:main2}, given in Section~\ref{S:main}, depend on the modular realization of the integral $I(t)$ given in \cite{BKV} and recent results on critical $L$-values due to Rogers, Wan, and Zucker \cite{RWZ}. Other supplementary results which are required for our proofs will be proved in Section~\ref{S:eta} and Section~\ref{S:zeta}. In Section~\ref{S:SI}, we discuss a Shioda-Inose structure for the family $X_t$ of $K3$ surfaces and give an example of a potential relationship between the Feynman integral and a critical $L$-value of the symmetric square of the underlying elliptic curve in the S-I structure. Finally, we give some further observations in Section~\ref{S:conclude}.

\section{Special values of the Dedekind eta function and Weber's functions}\label{S:eta}
Here we prove some auxiliary results about values of $\eta(\tau)$ and Weber's modular functions:
\begin{align*}
f_0(\tau) &= e^{-\pi i/24}\frac{\eta\left(\frac{\tau+1}{2}\right)}{\eta(\tau)},\\
f_1(\tau) &= \frac{\eta\left(\frac{\tau}{2}\right)}{\eta(\tau)},\\
f_2(\tau) &= \sqrt{2}\frac{\eta(2\tau)}{\eta(\tau)},
\end{align*}
at certain CM points which will appear in the proofs of Theorem~\ref{T:main} and Theorem~\ref{T:main2} .
\begin{lemma}\label{L:eta}
The following evaluations of $\eta(\tau),f_0(\tau),$ and $f_2(\tau)$ are true:
\begin{align}
\left|\eta\left(\mfrac{3+\sqrt{-15}}{2}\right)\right| &= \left(\mfrac{\Gamma\left(\mfrac{1}{15}\right)\Gamma\left(\mfrac{2}{15}\right)\Gamma\left(\mfrac{4}{15}\right)\Gamma\left(\mfrac{8}{15}\right)}{120\pi^3}\right)^{1/4} e^{-\frac{1}{12}\log\left(\frac{1+\sqrt{5}}{2}\right)}, \label{E:eta15a}\\
\left|\eta\left(\mfrac{3+\sqrt{-15}}{6}\right)\right| &= \left(\mfrac{\Gamma\left(\mfrac{1}{15}\right)\Gamma\left(\mfrac{2}{15}\right)\Gamma\left(\mfrac{4}{15}\right)\Gamma\left(\mfrac{8}{15}\right)}{40\pi^3}\right)^{1/4} e^{\frac{1}{12}\log\left(\frac{1+\sqrt{5}}{2}\right)},\label{E:eta15b}\\
\left|f_2\left(\mfrac{3+\sqrt{-15}}{2}\right)\right| &= \left|f_2\left(\mfrac{3+\sqrt{-15}}{6}\right)\right|^{-1}, \label{E:f2a}\\
\left|\eta\left(\mfrac{3+\sqrt{-3}}{2}\right)\right| &= \mfrac{3^{1/8}}{2\pi} \Gamma\left(\mfrac{1}{3}\right)^{3/2}, \label{E:eta3a}\\
\left|\eta\left(\mfrac{3+\sqrt{-3}}{6}\right)\right| &= \mfrac{3^{3/8}}{2\pi} \Gamma\left(\mfrac{1}{3}\right)^{3/2}, \label{E:eta3b}\\
\left|f_2\left(\mfrac{3+\sqrt{-3}}{2}\right)\right| &= \left|f_2\left(\mfrac{3+\sqrt{-3}}{6}\right)\right| = 2^{1/6}, \label{E:f2b}\\
f_0\left(\mfrac{\sqrt{-3}}{3}\right)&= f_0(\sqrt{-3}) =2^{1/3}.\label{E:f0a}
\end{align}
\end{lemma}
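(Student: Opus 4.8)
The plan is to compute each absolute value by combining the classical Chowla--Selberg formula with the known transformation behaviour of $\eta$ and of Weber's functions, together with a few explicit modular equations that hold at the CM points in question. For the two eta evaluations \eqref{E:eta15a} and \eqref{E:eta15b}, I would start from Chowla--Selberg for the discriminant $-15$ (class number $2$, with reduced forms $x^2+xy+4y^2$ and $2x^2+xy+2y^2$ representing the two ideal classes, and fundamental unit $\varepsilon_{15}=(1+\sqrt5)/2$). The Chowla--Selberg formula gives the product $|\eta(\tau_1)\eta(\tau_2)|$ over CM points $\tau_1,\tau_2$ representing the two classes in terms of the product of the four gamma values $\Gamma(1/15)\Gamma(2/15)\Gamma(4/15)\Gamma(8/15)$ (the numerators being exactly the residues $a$ with $\left(\frac{a}{15}\right)=1$). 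Here $\tau_1=\frac{3+\sqrt{-15}}{2}$ and $\tau_2=\frac{3+\sqrt{-15}}{6}$ are the natural representatives, so I get the \emph{product} $|\eta(\tau_1)|\,|\eta(\tau_2)|$ immediately. To separate the two factors I need the \emph{ratio}, and that is exactly where $f_2$ enters: $f_2(2\tau)=\sqrt2\,\eta(4\tau)/\eta(2\tau)$-type identities, and more directly the definition $f_2(\tau)=\sqrt2\,\eta(2\tau)/\eta(\tau)$ together with $\tau_2 = \tau_1/3$ and the Fricke/Atkin--Lehner involution on level $15$, relate $|\eta(\tau_1)|$ and $|\eta(\tau_2)|$ through a power of the unit $\varepsilon_{15}$; this yields \eqref{E:f2a} and pins down the ratio $|\eta(\tau_1)|/|\eta(\tau_2)| = \sqrt3\,\varepsilon_{15}^{-1/6}$ (the $\sqrt3$ coming from the $w_3$ Atkin--Lehner normalisation and the denominator $6$ vs $2$), after which \eqref{E:eta15a} and \eqref{E:eta15b} follow by solving the linear system in logarithms.

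For \eqref{E:eta3a}, \eqref{E:eta3b} I would do the analogous but easier computation with discriminant $-3$: here the class number is $1$, so Chowla--Selberg directly evaluates $|\eta|$ at a single CM point in terms of $\Gamma(1/3)$, and the standard value $|\eta(e^{i\pi/3})| = e^{-\pi/24}\,3^{1/8}\Gamma(1/3)^{3/2}/(2\pi)$ is the textbook case. The points $\frac{3+\sqrt{-3}}{2}$ and $\frac{3+\sqrt{-3}}{6}$ are $\GL_2(\ZZ)$-translates (by $\tau\mapsto\tau-1$, $\tau\mapsto\tau/3$, etc.) of $\frac{1+\sqrt{-3}}{2}=e^{i\pi/3}$ and of $\frac{\sqrt{-3}}{3}=\frac{i}{\sqrt3}$ respectively; using $|\eta(\tau+1)|=|\eta(\tau)|$ and $|\eta(-1/\tau)| = |\tau|^{1/2}|\eta(\tau)|$ I can push the textbook value to these two points, the extra factors of $3^{1/8}$ versus $3^{3/8}$ being exactly the $|\tau|^{1/2}$ Jacobian from the inversions. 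Then \eqref{E:f2b} is just the quotient of \eqref{E:eta3b}-type values at $2\tau$ and $\tau$: one checks $f_2$ at these points equals $2^{1/6}$ by a short modular-equation argument (the relevant $\tau$ and $2\tau$ are $\GL_2(\ZZ)$-equivalent up to the sign of the imaginary part, forcing $|f_2|^3=2$ via $f_0f_1f_2=\sqrt2$ and the symmetry $f_1\leftrightarrow f_2$ at the fixed point).

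Finally \eqref{E:f0a}: the point $\sqrt{-3}$ and $\sqrt{-3}/3$ are exchanged by $\tau\mapsto-1/(3\tau)$ (composed with a translation), and under $\tau\mapsto-1/\tau$ Weber's functions satisfy $f_0(-1/\tau)=f_0(\tau)$ while $f_1\leftrightarrow f_2$; combined with the known value $f_1(\sqrt{-2})$-style evaluations and the identity $f_0^8 = f_1^8 + f_2^8$ (equivalently $f_0 f_1 f_2 = \sqrt2$ and $f_0^8 - f_1^8 - f_2^8 = 0$), the fixed-point condition at $\sqrt{-3}$ forces $f_1=f_2$ there, hence $f_0^8 = 2 f_1^8$ and $f_0^3 f_1^3 f_2^3 = 2\sqrt2$ give $f_0 = 2^{1/3}$; the equality $f_0(\sqrt{-3}/3)=f_0(\sqrt{-3})$ is then the transformation invariance. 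The main obstacle I anticipate is \emph{not} Chowla--Selberg itself but the careful bookkeeping of roots of unity and unit powers when separating the two level-$15$ eta values --- i.e. getting the exponent of $\varepsilon_{15}=(1+\sqrt5)/2$ and the auxiliary factor $\sqrt3$ exactly right in \eqref{E:f2a}; the rest is routine application of $\eta$'s modular transformations and the Weber syzygies $f_0 f_1 f_2 = \sqrt2$, $f_0^8 = f_1^8 + f_2^8$.
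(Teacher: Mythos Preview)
Your overall strategy---classical Chowla--Selberg plus modular transformations---is a legitimate alternative to the paper's approach, which instead applies the refined formulas of van der Poorten--Williams and Muzaffar--Williams that evaluate each $|\eta|$ and $|f_2|$ at a CM point \emph{individually} (not merely the product over the class group). Those refined formulas already contain the Kronecker-limit correction that inserts the unit $(1+\sqrt5)/2$, so the paper never has to ``separate'' two eta values; it just reads off \eqref{E:eta15a} and \eqref{E:eta15b} directly and then quotes the analogous $|f_2|$ formula for \eqref{E:f2a} and \eqref{E:f2b}.

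There is, however, a genuine error in your plan for \eqref{E:f0a}. The point $\sqrt{-3}$ is \emph{not} fixed by $\tau\mapsto -1/\tau$ (its image is $\sqrt{-3}/3$), so the swap $f_1\leftrightarrow f_2$ under inversion does not force $f_1(\sqrt{-3})=f_2(\sqrt{-3})$. Worse, if that equality did hold, then $f_0 f_1 f_2=\sqrt2$ gives $f_1^2=\sqrt2/f_0$, and substituting into $f_0^8=f_1^8+f_2^8=2f_1^8$ yields $f_0^{12}=8$, i.e.\ $f_0=2^{1/4}$, contradicting the target $2^{1/3}$. So the Weber-syzygy fixed-point argument cannot work as stated. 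The paper's route here is simply to quote the known Ramanujan class invariant $G_3=2^{1/12}$ (whence $f_0(\sqrt{-3})=2^{1/4}G_3=2^{1/3}$) and then use the genuine invariance $f_0(-1/\tau)=f_0(\tau)$ to pass to $\sqrt{-3}/3$.

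A secondary gap: in the $d=-15$ step, your stated ratio $|\eta(\tau_1)|/|\eta(\tau_2)|=\sqrt3\,\varepsilon_{15}^{-1/6}$ disagrees with the target (dividing \eqref{E:eta15a} by \eqref{E:eta15b} gives $3^{-1/4}\varepsilon_{15}^{-1/6}$), and more importantly you have not explained \emph{how} the unit $\varepsilon_{15}$ is produced. Classical Chowla--Selberg only gives the product over the class group; the unit enters through Kronecker's first limit formula applied to the nontrivial class character, which is exactly what the van der Poorten--Williams formula encodes. Your $f_2$/Atkin--Lehner sketch does not obviously supply this, since $f_2$ compares $\eta(\tau)$ with $\eta(2\tau)$ rather than with $\eta(\tau/3)$. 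This part is salvageable with more work, but as written it is not a proof.
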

\begin{proof}
A general formula for $\left|\eta\left(\mfrac{b+\sqrt{d}}{2a}\right)\right|$, where $ax^2+bxy+cy^2$ is a positive-definite, primitive, integral, binary quadratic form of fundamental discriminant $d<0$, is given in \cite[Thm. 9.3]{PW}:
\begin{equation}
\begin{aligned}
a^{-1/4}&\left|\eta\left(\mfrac{b+\sqrt{d}}{2a}\right)\right| \\
&= (2\pi|d|)^{-1/4}\left\{\prod_{m=1}^{|d|} \Gamma\left(\mfrac{m}{|d|}\right)^{\left(\frac{d}{m}\right)}\right\}^{\frac{w(d)}{8h(d)}} \exp\left(-\mfrac{\pi w(d) \sqrt{|d|}}{48h(d)}\sum\limits_{\substack{L\in H(d) \\ L\ne I}}f(L,K)l(L,d)m(L,d)\right), \label{E:eta}
\end{aligned}
\end{equation}
where $H(d)$ is the group of equivalence classes of such forms, $h(d)=|H(d)|,$
\[w(d)=\begin{cases}
6,  &\text{if } d=-3,\\
4,  &\text{if } d=-4,\\
2,  &\text{if } d<-4,
\end{cases}\]
$K=[a,b,c]\in H(d)$, and the quantities $f(L,K), l(L,d),$ and $m(L,d)$ are defined therein. We have from \cite[Ex. 9.4]{PW} that $H(-15)=\{I,A\}$, where $I$ and $A$ are classes equivalent to $[1,3,6]$ and $[3,3,2]$, respectively, together with the following information about the relevant quantities in \eqref{E:eta}:
\[f(A,I)=1, \quad f(A,A)=-1, \quad l(A,-15)=\mfrac{8}{15}, \quad m(A,-15)=\mfrac{\sqrt{15}}{2\pi}\log \left(\mfrac{1+\sqrt{5}}{2}\right).\]
By Euler's reflection formula
\[\Gamma(x)\Gamma(1-x) =\frac{\pi}{\sin(\pi x)}, \qquad 0<x<1,\]
we can express the product of gamma values in \eqref{E:eta} when $d=-15$ as
\begin{equation*}
\prod_{m=1}^{15} \Gamma\left(\mfrac{m}{15}\right)^{\left(\frac{-15}{m}\right)} = \left(\mfrac{\Gamma\left(\mfrac{1}{15}\right)\Gamma\left(\mfrac{2}{15}\right)\Gamma\left(\mfrac{4}{15}\right)\Gamma\left(\mfrac{8}{15}\right)}{4\pi^2}\right)^2.
\end{equation*} 
Using \eqref{E:eta} with $[a,b,c]=[1,3,6]$ and $[a,b,c]=[3,3,2]$ gives \eqref{E:eta15a} and \eqref{E:eta15b}. We obtain \eqref{E:eta3a} and \eqref{E:eta3b} in a similar way (note that $H(-3)$ is trivial, so the exponential term in \eqref{E:eta} disappears).\\
\indent To prove \eqref{E:f2a} and \eqref{E:f2b}, we employ a formula established in \cite[\S 10]{MW}:
\begin{equation}\label{E:f2}
\left|f_2\left(\mfrac{b+\sqrt{d}}{2a}\right)\right|=\left(\mfrac{2}{\lambda_2}\right)^{1/4} 2^{\frac{m_2}{4}\frac{1-\left(\frac{d}{2}\right)}{2-\left(\frac{d}{2}\right)}}  e^{E(K,d)-E(M_2,\lambda_2^2 d)}.
\end{equation}
In this formula, the integers $a,b,c,$ and $d$ satisfy the same assumption as in \eqref{E:eta} and $m_2,\lambda_2\in\mathbb{Q}$ and $M_2\in H(\lambda_2^2 d)$ are dependent on $K=[a,b,c].$ The quantity $E(K,d)$ is defined by
\begin{equation}\label{E:E}
E(K,d)= \frac{\pi \sqrt{|d|}w(d)}{48h(d)}\sum_{\substack{L\in H(d)\\ L\ne I}} \chi(L,K)^{-1}\frac{t_1(d)}{j(L,d)}l(L,d),
\end{equation}
where the definition of each component in the summand is given in \cite{MW}. It turns out that knowing only certain values of $\chi(L,K)$ is sufficient for our purposes. When $d=-15$, we have from Theorem 2 in \cite[\S 10]{MW} that $\lambda_2=2$ regardless of the choice of equivalence class $K\in H(-15)$ and the classes $M_2$ corresponding to $I=[1,3,6]$ and $A=[3,3,2]$ are $I'=[1,6,24]$ and $A'=[3,6,8]$. Since $\left(\mfrac{-15}{2}\right)=1,$ the formula \eqref{E:f2} yields
\begin{align*}
\left|f_2\left(\mfrac{3+\sqrt{-15}}{2}\right)\right| &= e^{E(I,-15)-E(I',-60)},\\
\left|f_2\left(\mfrac{3+\sqrt{-15}}{6}\right)\right| &= e^{E(A,-15)-E(A',-60)}.
\end{align*}
Using the definition of $\chi(L,K)$ in \cite[\S 6]{MW}, one finds that $\chi(A,I)=\chi(A',I')=1$ and $\chi(A,A)=\chi(A',A')=-1.$ Since $H(-60)=\{I',A'\}$ and $\chi(L,K)$ is the only term on the right-hand side of \eqref{E:E} that depends on $K$, we have  $E(A,-15)=-E(I,-15)$ and $E(A',-60)=-E(I',-60),$ so \eqref{E:f2a} holds. On the other hand, if $d=-3,$ we have $\lambda_2 =2$, $m_2=1$, $\left(\mfrac{d}{2}\right)=-1$ and the exponential term in \eqref{E:f2} again vanishes due to the triviality of the group $H(d)$. This gives \eqref{E:f2b}.\\
\indent Finally, recall that for each $n\in\mathbb{N},$ Ramanujan's class invariant $G_n$ is defined by
\[G_n = 2^{-1/4}f_0(\sqrt{-n}).\]
Many values of $G_n$ are known for odd values of $n;$ in particular, we have $G_3 = 2^{1/12}$ (see, for example, \cite[p.189]{Berndt}), so $f_0(\sqrt{-3})=2^{1/3}.$ The first equality in \eqref{E:f0a} holds since $f_0(\tau)$ is invariant under the transformation $\tau\rightarrow -1/\tau$ \cite{YZ}.
\end{proof}
\begin{remark} 
In a study of short uniform random walks, Borwein et al. \cite[Thm.5.1]{BSWZ} obtained similar eta function evaluations at points in $\mathbb{Q}(\sqrt{-15})$ using the Chowla-Selberg formula, from which \eqref{E:eta} and \eqref{E:f2} were derived.
\end{remark}

\section{Some identities concerning $\pi^3$ and $\zeta(3)$}\label{S:zeta}
In an attempt to generalize Ramanujan's identity for $\zeta(2k+1)$, where $k$ is a non-zero integer, Grosswald \cite{Gro} defined
\begin{equation*}
F_s(\tau) =  \sum_{n\ge 1} \sigma_{-s}(n) e^{2\pi i n\tau},
\end{equation*}
where \[\sigma_t(n)=\sum_{d|n}d^t.\] It is obvious that $F_s$ can be written as a double series
\begin{equation}\label{E:Fs}
F_s(\tau)= \sum_{m,n\ge 1} \frac{e^{2\pi i mn\tau}}{n^s}.
\end{equation}
Moreover, for any odd integer $s>1$, we have from \cite{GMR} that
\begin{equation}\label{E:int}
F_s(\tau) = \sum_{n\ge 1} \frac{\sigma_s(n)}{n^s}e^{2\pi i n\tau} = \frac{(2\pi i)^s B_{s+1}}{2(s+1)(s-1)!}\int_{\tau}^{i\infty}(E_{s+1}(z)-1)(z-\tau)^{s-1}\,dz,
\end{equation}
where $B_k$ is the $k$th Bernoulli number and, for even $k\ge 2$, $E_k$ is the normalized weight $k$ Eisenstein series
\begin{equation*}
E_k(z) = 1-\frac{2k}{B_k}\sum_{n\ge 1}\sigma_{k-1}(n)q^n, \qquad q =e^{2\pi i z}.
\end{equation*}
In other words, when $s>1$ is odd, $F_s(\tau)$ is an Eichler integral of the Eisenstein series of weight $s+1$. In the proofs of our main theorems the identities \eqref{E:Bro}, \eqref{E:I2}, and \eqref{E:I3} will be rephrased in terms of the function $F_3(\tau)$ evaluated at certain algebraic numbers. We first record some useful transformations for $F_3(\tau)$. Throughout this paper $\mathfrak{H}$ denotes the upper-half plane.
\begin{proposition}\label{P:tran}
For all $\tau\in\mathfrak{H}$, we have
\begin{align}
F_3(\tau) &= F_3(\tau+1), \label{E:tran}\\
F_3(\tau)- \tau^2 F_3(-1/\tau) &= \frac{\zeta(3)(\tau^2-1)}{2}+\frac{(2\pi i)^3}{2\tau}\sum_{j=0}^2\frac{B_{2j}B_{4-2j}}{(2j)!(4-2j)!}\tau^{2j}, \label{E:inv}\\
F_3(\tau+1/2)&= -F_3(\tau)+\frac{9}{4}F_3(2\tau)-\frac{1}{4}F_3(4\tau). \label{E:htran}
\end{align}
\end{proposition}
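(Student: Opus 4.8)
The plan is to exploit the representation of $F_3$ as an Eichler integral given in \eqref{E:int}. Identity \eqref{E:tran} is immediate: from the $q$-expansion $F_3(\tau)=\sum_{n\ge 1}\sigma_{-3}(n)e^{2\pi i n\tau}$ we see $F_3$ is periodic with period $1$, so there is nothing to prove beyond remarking on the definition. The substance is in \eqref{E:inv} and \eqref{E:htran}.

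For \eqref{E:inv}, I would use that $E_4$ is a modular form of weight $4$ for $\mathrm{SL}_2(\ZZ)$, so $E_4(-1/z)=z^4 E_4(z)$. Writing \eqref{E:int} with $s=3$, namely $F_3(\tau)=\frac{(2\pi i)^3 B_4}{2\cdot 4\cdot 2!}\int_{\tau}^{i\infty}(E_4(z)-1)(z-\tau)^2\,dz$ (note $B_4=-1/30$), I would compute $\tau^2 F_3(-1/\tau)$ by making the change of variables $z\mapsto -1/z$ in the Eichler integral for $F_3(-1/\tau)$, using the modularity of $E_4$ to turn $E_4(-1/z)$ back into $z^4E_4(z)$, and tracking how $(z+1/\tau)^2$ transforms. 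The difference $F_3(\tau)-\tau^2F_3(-1/\tau)$ then becomes an integral of $(E_4(z)-1)$ against a polynomial in $z$ and $\tau$ coming only from the ``$-1$'' inhomogeneous term and the Jacobian factors; this is the standard period-polynomial computation for Eichler integrals of Eisenstein series. The polynomial part is evaluated using $\int_\tau^{i\infty}(z-\tau)^2\,dz$-type integrals (which converge after the cancellation) and the values $\zeta(4)$, $\zeta(2)$; the Bernoulli-number combination $\sum_{j=0}^2 \frac{B_{2j}B_{4-2j}}{(2j)!(4-2j)!}\tau^{2j}$ is exactly the period polynomial of $E_4$, and the $\zeta(3)(\tau^2-1)/2$ term records the ``constant of integration'' $\zeta(3)=F_3(i\infty)$ in the appropriate sense. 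I expect this to be the main obstacle: getting the normalization constants, the factor of $(2\pi i)^3$, and the $\zeta(3)$ term exactly right requires care, though it is a known type of identity (this is essentially Grosswald's functional equation for $F_3$, or the $s=3$ case of the Ramanujan-type formula in \cite{GMR}).

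For \eqref{E:htran}, I would argue directly from the Dirichlet-series side. Using \eqref{E:Fs}, $F_3(\tau)=\sum_{m,n\ge 1} e^{2\pi i mn\tau}/n^3$, so $F_3(\tau+1/2)=\sum_{m,n\ge 1}(-1)^{mn}e^{2\pi i mn\tau}/n^3$. Splitting the sum according to the parity of $m$ and $n$: the sign $(-1)^{mn}$ is $-1$ exactly when both $m$ and $n$ are odd. Hence
\[
F_3(\tau+1/2)=F_3(\tau)-2\sum_{\substack{m,n\ge 1\\ m,n\ \mathrm{odd}}}\frac{e^{2\pi i mn\tau}}{n^3}.
\]
Now I would express the sum over odd $m,n$ via inclusion–exclusion in terms of $F_3$ at $\tau$, $2\tau$, $4\tau$: restricting $n$ to be odd multiplies by $(1-2^{-3})$ on the $n$-sum after removing $n$ even (which rescales $\tau\mapsto 2\tau$ partially), and restricting $m$ to be odd replaces $e^{2\pi i mn\tau}$-sums over all $m$ by sums over all $m$ minus those over even $m$ (i.e.\ $F_3(\tau)-F_3(2\tau)$-type pieces). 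Carrying out the two independent parity restrictions and collecting the coefficients of $F_3(\tau),F_3(2\tau),F_3(4\tau)$ should reproduce $-F_3(\tau)+\tfrac94F_3(2\tau)-\tfrac14F_3(4\tau)$; the coefficients $1,-2+\tfrac14\cdot(\text{stuff})$ etc.\ are forced, and one checks them by comparing the first few $q$-coefficients as a sanity check. This part is elementary bookkeeping with no real obstacle. An alternative, if one prefers uniformity with \eqref{E:inv}, is to write $E_4(z+1/2)=-E_4(z)+\tfrac94\cdot 2^{-4}\cdot 16\,E_4(2z)-\cdots$ using the action of $\Gamma_0(4)$-type operators on $E_4$ and feed that into \eqref{E:int}, but the direct Dirichlet-series computation is cleaner.
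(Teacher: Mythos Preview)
Your proposal is correct. For \eqref{E:tran} and \eqref{E:inv} you do essentially what the paper does: the paper simply observes that \eqref{E:tran} is obvious from the definition and that \eqref{E:inv} is a special case of Grosswald's formula \cite{Gro,GMR}, while you sketch in more detail how one would re-derive Grosswald's formula from the Eichler integral and the modularity of $E_4$.

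For \eqref{E:htran} your primary route differs from the paper's. The paper applies the integral representation \eqref{E:int} to all four terms, changes variables so that each becomes an integral against $(z-\tau)^2$, and then observes that the resulting integrand is a multiple of
\[
E_4(z+1/2)+E_4(z)-18E_4(2z)+16E_4(4z),
\]
which vanishes identically by a $q$-expansion check. Your approach instead works directly with the double series \eqref{E:Fs}: writing $F_3(\tau+1/2)=\sum_{m,n\ge 1}(-1)^{mn}e^{2\pi i mn\tau}/n^3$ and splitting off the odd-odd part gives $F_3(\tau+1/2)=F_3(\tau)-2S$ with
\[
S=\bigl(F_3(\tau)-F_3(2\tau)\bigr)-\tfrac{1}{8}\bigl(F_3(2\tau)-F_3(4\tau)\bigr),
\]
which immediately yields the stated identity. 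Your argument is more elementary and self-contained (it never touches the Eichler integral or any Eisenstein series identity), while the paper's argument has the advantage of making the underlying modular-form identity $E_4(z+1/2)+E_4(z)-18E_4(2z)+16E_4(4z)=0$ visible and generalizable. You actually mention the paper's method as an alternative at the end of your proposal.
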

\begin{proof}
The transformation \eqref{E:tran} is obvious from the definition of $F_3(\tau)$ and \eqref{E:inv} is a special case of Grosswald's formula \cite{Gro,GMR}. Using the integral expression \eqref{E:int} for each term in \eqref{E:htran} and performing a change of variable in each integral, we obtain
\begin{align*}
F_3(\tau+1/2)+&F_3(\tau)-\frac{9}{4}F_3(2\tau)+\frac{1}{4}F_3(4\tau)\\
& = \frac{(2\pi i)^3 B_4}{16}\int_\tau^{i\infty}(E_4(z+1/2)+E_4(z)-18E_4(2z)+16E_4(4z))(z-\tau)^2 \,dz.
\end{align*}
It is easily seen from the $q$-expansion of the Eisenstein series that $E_4(z+1/2)+E_4(z)-18E_4(2z)+16E_4(4z)=0$, so \eqref{E:htran} follows.
\end{proof}

\begin{lemma}\label{L:F3}
The following identities are true:
\begin{align}
F_3\left(\mfrac{\sqrt{-3}}{2}+\mfrac{1}{2}\right) &= \mfrac{\sqrt{3}\pi^3}{90}-\mfrac{\zeta(3)}{2}, \label{E:F3a}\\
F_3\left(\mfrac{\sqrt{-3}}{6}+\mfrac{1}{2}\right) &= \mfrac{7\sqrt{3}\pi^3}{810}-\mfrac{\zeta(3)}{2}, \label{E:F3b}\\
8\left(3F_3\left(\mfrac{\sqrt{-3}}{6}\right)-F_3\left(\mfrac{\sqrt{-3}}{2}\right)\right)-9\left(3F_3\left(\mfrac{\sqrt{-3}}{3}\right)-F_3(\sqrt{-3})\right)&=\mfrac{7\sqrt{3}\pi^3}{135}+\zeta(3),\label{E:F3c}\\
24F_3\left(\mfrac{\sqrt{-15}}{6}+\mfrac{1}{2}\right)-8F_3\left(\mfrac{\sqrt{-15}}{2}+\mfrac{1}{2}\right)-3F_3\left(\mfrac{\sqrt{-15}}{3}\right)+F_3(\sqrt{-15})&=\mfrac{\pi^3}{\sqrt{15}}-7\zeta(3). \label{E:F3d}
\end{align}
\end{lemma}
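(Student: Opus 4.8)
The plan is to evaluate each $F_3$ at the relevant CM point by combining the inversion formula \eqref{E:inv}, the translation \eqref{E:tran}, and the half-shift formula \eqref{E:htran} of Proposition~\ref{P:tran} so as to turn $F_3(\tau)$ into a closed form. The key observation is that for the points in question the arguments $\tau$ and $-1/\tau$ (possibly after a translation by $1$ or $1/2$) are $\mathrm{SL}_2(\mathbb{Z})$- or $\Gamma_0(N)$-equivalent, so that \eqref{E:inv} becomes a \emph{linear} relation among the $F_3$-values appearing in the statement rather than introducing new unknowns. For \eqref{E:F3a} and \eqref{E:F3b}: writing $\tau_0=\tfrac{\sqrt{-3}}{2}+\tfrac12$ (resp. $\tfrac{\sqrt{-3}}{6}+\tfrac12$), one checks that $-1/\tau_0$ differs from $\tau_0$ by an integer translation — indeed $\tau_0$ lies on the standard arc $|\tau|=1$ (resp. a $\Gamma_0(3)$-translate thereof) — so the left side of \eqref{E:inv} collapses to $(1-\tau_0^{-2})F_3(\tau_0)$ up to the $F_3(\tau_0+1)=F_3(\tau_0)$ identity, and \eqref{E:inv} can be solved for $F_3(\tau_0)$; the right side is an explicit expression in $\zeta(3)$, $\pi^3$, $\tau_0^2\in\mathbb{Q}(\sqrt{-3})$ and $\tau_0^{-1}$, which simplifies to the stated value after expanding $\sum_{j=0}^2 \frac{B_{2j}B_{4-2j}}{(2j)!(4-2j)!}\tau_0^{2j}$ (the Bernoulli numbers are $B_0=1$, $B_2=\tfrac16$, $B_4=-\tfrac{1}{30}$).

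For \eqref{E:F3c} and \eqref{E:F3d} the strategy is the same but one must use \eqref{E:htran} to relate the half-integer-shifted points to the unshifted ones. Note that the combination $3F_3(\sqrt{-d}/6)-F_3(\sqrt{-d}/2)$ is, via \eqref{E:htran} applied with $2\tau=\sqrt{-d}/3$ and then with $\tau=\sqrt{-d}/6$, expressible through $F_3$ at $\sqrt{-d}/6+\tfrac12$, $\sqrt{-d}/3$, and $\sqrt{-d}/12$; iterating and regrouping, the left-hand sides of \eqref{E:F3c}–\eqref{E:F3d} are designed precisely so that all the genuinely new Eichler-integral values cancel and only a single independent period survives, which is then pinned down by one application of the inversion formula \eqref{E:inv} at a point fixed (up to $\Gamma$) by $\tau\mapsto-1/\tau$. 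Concretely, for \eqref{E:F3d} one uses that $\sqrt{-15}/\sqrt{-15}$-type reciprocity sends $\sqrt{-15}/a$ to $-a/\sqrt{-15}=a\sqrt{-15}/15$, and the level-$15$ structure makes the points $\{\sqrt{-15}/1,\sqrt{-15}/3,\sqrt{-15}/5,\sqrt{-15}/15\}$ pair up; the coefficients $24,-8,-3,1$ are exactly the Atkin–Lehner-type weights that kill the Eichler-error terms.

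I expect the main obstacle to be bookkeeping rather than conceptual: verifying that the prescribed linear combinations annihilate \emph{all} the spurious $F_3$-values and leave only terms that the inversion formula \eqref{E:inv} evaluates. This requires tracking how $\tau\mapsto\tau+1/2$ and $\tau\mapsto-1/\tau$ act on the finite set of points $\{\sqrt{-d}/a+\epsilon : a\mid d\ \text{(or related)},\ \epsilon\in\{0,\tfrac12\}\}$ and checking the resulting system of linear equations among $F_3$-values has the stated combination in its "evaluable" span. A secondary technical point is keeping the $\zeta(3)$ and $\pi^3$ contributions straight through several applications of \eqref{E:inv}, since each use contributes a $\tfrac{\zeta(3)(\tau^2-1)}{2}$ term and a rational multiple of $(2\pi i)^3/\tau$; these must be summed with the correct weights $24,-8,-3,1$ (resp. $8\cdot3,-8,-9\cdot3,9$) and simplified using $(2\pi i)^3=-8\pi^3 i$ together with the purely imaginary values of $\tau$ to produce the real quantities $\tfrac{\pi^3}{\sqrt{15}}$ and $\tfrac{7\sqrt3\,\pi^3}{135}$ on the right-hand sides. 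Once the cancellation pattern is confirmed, each identity reduces to an elementary algebraic simplification over $\mathbb{Q}(\sqrt{-3})$ or $\mathbb{Q}(\sqrt{-15})$.
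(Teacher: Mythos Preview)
Your overall strategy matches the paper's: all four identities are obtained by repeatedly combining the inversion formula \eqref{E:inv}, the integer translation \eqref{E:tran}, and the half-shift \eqref{E:htran}, with the specific coefficients chosen so that auxiliary $F_3$-values cancel. Two points need correcting.

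First, for \eqref{E:F3b} your claim that $-1/\tau_0$ differs from $\tau_0$ by an integer is false: if $\tau_0=\tfrac{\sqrt{-3}}{6}+\tfrac12$ then $-1/\tau_0=\tfrac{\sqrt{-3}}{2}-\tfrac32$, which is an integer translate of the \emph{first} point $\tau_1=\tfrac{\sqrt{-3}}{2}+\tfrac12$, not of $\tau_0$ itself. So \eqref{E:inv} does not collapse to a single unknown; rather it expresses $F_3(\tau_0)$ in terms of the already-computed $F_3(\tau_1)$, which is how the paper proceeds.

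Second, your sketch for \eqref{E:F3d} is misleading: the set $\{\sqrt{-15},\sqrt{-15}/3,\sqrt{-15}/5,\sqrt{-15}/15\}$ does \emph{not} close up under the relevant moves, and no ``Atkin--Lehner pairing'' of that shape makes the computation go through. In the paper the inversion $\tau\mapsto -1/\tau$ applied to $\tfrac{\sqrt{-15}}{6}+\tfrac12$ and to $\tfrac{\sqrt{-15}}{3}+1$ produces points with denominators $4$ and $8$ (namely $\tfrac{\sqrt{-15}}{4}\pm\tfrac14$, $\tfrac{\sqrt{-15}}{8}+\tfrac18$, $\tfrac{\sqrt{-15}}{8}+\tfrac58$), and one needs five intermediate identities relating $F_3$ at these auxiliary points before everything collapses to a single residual term $F_3(\tau_3)-\tau_3^2F_3(-1/\tau_3)$ with $\tau_3=\tfrac{\sqrt{-15}-1}{4}$, which \eqref{E:inv} then evaluates. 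The bookkeeping is therefore heavier and the orbit structure less symmetric than your outline suggests; this is the part you should actually carry out rather than assert.
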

\begin{proof}
Let $\tau_1=\sqrt{-3}/2+1/2$. Then $-1/\tau_1 = \sqrt{-3}/2-1/2=\tau_1-1,$ so we have from \eqref{E:tran} that \[F_3(-1/\tau_1)=F_3(\tau_1-1)=F_3(\tau_1).\] Together with \eqref{E:inv}, this gives \eqref{E:F3a}. Similarly, if $\tau_2= \sqrt{-3}/6+1/2$, then $-1/\tau_2 = \sqrt{-3}/2-3/2=\tau_1 -2,$ so \eqref{E:F3b} follows from \eqref{E:tran}, \eqref{E:inv}, and \eqref{E:F3a}. \\
\indent Next, applying \eqref{E:htran} to $F_3(\tau_1)$ and $F_3(\tau_2)$ and using the fact that $2\sqrt{-3}$ and $2\sqrt{-3}/3$ are sent to $\sqrt{-3}/6$ and $\sqrt{-3}/2$ respectively under the transformation $\tau\rightarrow -1/\tau$ yield
\begin{align*}
4F_3(\tau_1) &= 9F_3(\sqrt{-3})-4F_3\left(\mfrac{\sqrt{-3}}{2}\right)+12F_3\left(\mfrac{\sqrt{-3}}{6}\right)-\mfrac{41\sqrt{3}\pi^3}{216}+\mfrac{13\zeta(3)}{2},\\
12F_3(\tau_2) &= 27F_3\left(\mfrac{\sqrt{-3}}{3}\right)-12F_3\left(\mfrac{\sqrt{-3}}{6}\right)+4F_3\left(\mfrac{\sqrt{-3}}{2}\right)-\mfrac{17\sqrt{3}\pi^3}{216}+\mfrac{7\zeta(3)}{2}.
\end{align*}
The identity \eqref{E:F3c} follows by subtracting the second equation from the first. \\
\indent The proof of \eqref{E:F3d} is much more involved. We first use the transformations in Proposition~\ref{P:tran} to deduce the following identities:
\begin{align}
24F_3\left(\mfrac{\sqrt{-15}}{6}+\mfrac{1}{2}\right) &=(4\sqrt{-15}-4)F_3\left(\mfrac{\sqrt{-15}}{4}+\mfrac{1}{4}\right)+\left(\mfrac{37\sqrt{15}-81i}{270}\right)\pi^3 
 +(2\sqrt{-15}-14)\zeta(3), \label{E:1}\\
 -3F_3\left(\mfrac{\sqrt{-15}}{3}\right) &= -3F_3\left(\mfrac{\sqrt{-15}}{3}+1\right)\\ 
 &= (2-2\sqrt{-15})F_3\left(\mfrac{\sqrt{-15}}{8}+\mfrac{5}{8}\right)-\left(\mfrac{97\sqrt{15}-621i}{4320}\right)\pi^3 -\left(\mfrac{2\sqrt{-15}-5}{2}\right)\zeta(3), \nonumber \\
F_3\left(\mfrac{\sqrt{-15}}{8}+\mfrac{5}{8}\right) &= -F_3\left(\mfrac{\sqrt{-15}}{8}+\mfrac{1}{8}\right)+\mfrac{9}{4}F_3\left(\mfrac{\sqrt{-15}}{4}+\mfrac{1}{4}\right)-\mfrac{1}{4}F_3\left(\mfrac{\sqrt{-15}}{2}+\mfrac{1}{2}\right), \\
F_3\left(\mfrac{\sqrt{-15}}{8}+\mfrac{1}{8}\right) &= \mfrac{(\sqrt{-15}-7)}{32}F_3\left(\mfrac{\sqrt{-15}}{2}+\mfrac{1}{2}\right)+\left(\mfrac{392\sqrt{15}-72i}{61440}\right)\pi^3+\left(\mfrac{\sqrt{-15}-39}{64}\right)\zeta(3), \\
F_3(\sqrt{-15})&=9F_3\left(\mfrac{\sqrt{-15}}{2}+\mfrac{1}{2}\right)-4F_3\left(\mfrac{\sqrt{-15}}{4}+\mfrac{1}{4}\right)-4F_3\left(\mfrac{\sqrt{-15}}{4}-\mfrac{1}{4}\right). \label{E:2}
\end{align}
We simplify the left-hand side of \eqref{E:F3d} using \eqref{E:1}-\eqref{E:2}. After a computation, the remaining term is a multiple of $F_3(\tau_3)-\tau_3^2 F_3(-1/\tau_3),$ where $\tau_3=(\sqrt{-15}-1)/4.$ Therefore, we can apply \eqref{E:inv} once again in the final step to obtain \eqref{E:F3d}.
\end{proof}

\section{Proofs of the main theorems} \label{S:main}
We can now prove Theorem~\ref{T:main} and Theorem~\ref{T:main2}.
\begin{proof}[Proof of Theorem~\ref{T:main}]
Let us first recall from Theorem~2.3.2 and Lemma~2.4.1 in \cite{BKV} that if we parametrize $t$ by the modular function \begin{equation}\label{E:t}
t(\tau)=-\left(\frac{\eta(\tau)\eta(3\tau)}{\eta(2\tau)\eta(6\tau)}\right)^6,
\end{equation}
then for each $\tau\in \mathfrak{H},$ we can express the integral $I(t)$ as
\begin{equation}\label{E:Imod}
I(t(\tau)) = \varpi_1(\tau)\left(\frac{\tau}{2\pi i}\sum_{\substack{m\in\mathbb{Z}\\ n\ge 1}}\frac{\psi(n)}{n^2}\frac{1}{m^2-n^2\tau^2}-4(2\pi i\tau)^3\right),
\end{equation}
where \[\varpi_1(\tau)=\frac{(\eta(2\tau)\eta(6\tau))^4}{(\eta(\tau)\eta(3\tau))^2}\]
and the value of $\psi(n)$ depends only on $n$ modulo $6$, namely \[\psi(0)=-5760,\,\,\psi(\pm 1) = -48,\,\, \psi(\pm 2)= 720,\,\, \psi(3)=384.\]
It then remains to choose $\tau$ appropriately and evaluate the terms on the right-hand side of \eqref{E:Imod} using the results proven in the previous sections. It is shown in \cite[\S 2.5]{BKV} that if $\tau_1=(-3+\sqrt{-15})/24$, then $t(\tau_1)=1,$ whence
\begin{equation*}
I(1)=\varpi_1(\tau_1)\left(\frac{\tau_1}{2\pi i}\sum_{\substack{m\in\mathbb{Z}\\ n\ge 1}}\frac{\psi(n)}{n^2}\frac{1}{m^2-n^2\tau_1^2}-4(2\pi i\tau_1)^3\right).
\end{equation*}
The authors of \cite{BKV} manipulate the right-hand side of the equation above to get a much simpler expression
\begin{equation}\label{E:I1in}
I(1)=-\frac{(2\pi i)^3}{8\sqrt{-15}}\varpi_2(\tau_2),
\end{equation}
where \[\varpi_2(\tau)=\frac{(\eta(\tau)\eta(3\tau))^4}{(\eta(2\tau)\eta(6\tau))^2}\] and $\tau_2=-1/(6\tau_1) = (3+\sqrt{-15})/6. $ Each step in their argument is straightforward, except the following equation \cite[Eq. 2.5.8]{BKV}:
\begin{equation}\label{E:I1in2}
\sum_{\substack{m\ge 1\\ n\ge 1}}\frac{\psi(n)}{n^2}\left(\frac{1}{24m^2-6mn+n^2}+\frac{1}{24m^2+6mn+n^2}\right)=11\zeta(4),
\end{equation}
which is stated without proof. Since our proof is quite involved, we give the details here. \\
Let $S$ be the sum in \eqref{E:I1in2}. Observe that $\sum_{n\ge 1}\psi(n)/n^2 =0,$ so we can rewrite $S$ as
\begin{align*}
S&= \sum_{\substack{m\ge 1\\ n\ge 1}}\frac{\psi(n)}{n^2}\left(\frac{1}{24m^2-6mn+n^2}+\frac{1}{24m^2+6mn+n^2}-\frac{1}{12m^2}\right)\\
&= \frac{1}{96}\sum_{\substack{m\ge 1\\ n\ge 1}}\frac{\psi(n)}{m^3}\left(\frac{2m-n}{24m^2-6mn+n^2}+\frac{2m+n}{24m^2+6mn+n^2}\right)\\
&= \frac{1}{48}\sum_{\substack{m\ge 1\\ n\in\mathbb{Z}}}\frac{\psi(n)}{m^2}\frac{1}{24m^2+6mn+n^2}+ \frac{1}{96}\sum_{\substack{m\ge 1\\ n\in\mathbb{Z}}}\frac{\psi(n)}{m^3}\frac{n}{24m^2+6mn+n^2}+5\zeta(4).
\end{align*}
Substituting $-n-6m$ for $n$ yields
\[\sum_{\substack{m\ge 1\\ n\in\mathbb{Z}}}\frac{\psi(n)}{m^3}\frac{n}{24m^2+6mn+n^2}=-3\sum_{\substack{m\ge 1\\ n\in\mathbb{Z}}}\frac{\psi(n)}{m^2}\frac{1}{24m^2+6mn+n^2}.\]
Therefore, we have
\begin{equation}\label{E:I1in3}
S=-\frac{1}{96}\sum_{\substack{m\ge 1\\ n\in\mathbb{Z}}}\frac{\psi(n)}{m^2}\frac{1}{24m^2+6mn+n^2}+5\zeta(4).
\end{equation}
Let \[T=-\frac{1}{96}\sum_{\substack{m\ge 1\\ n\in\mathbb{Z}}}\frac{\psi(n)}{m^2}\frac{1}{24m^2+6mn+n^2}.\]
Working modulo $6$, one finds that for any expression $f(m,n)$, if $\sum_{\substack{m\ge 1\\ n\in\mathbb{Z}}}f(m,n)^{-1}$ converges absolutely, then
\begin{equation}\label{E:Q}
-\frac{1}{48}\sum_{\substack{m\ge 1\\ n\in\mathbb{Z}}}\frac{\psi(n)}{f(m,n)}=\sum_{\substack{m\ge 1\\ n\in\mathbb{Z}}}\left(\frac{1}{f(m,n)}-\frac{16}{f(m,2n)}-\frac{9}{f(m,3n)}+\frac{144}{f(m,6n)}\right).
\end{equation}
Hence $T$ can be expressed as
\begin{align*}
T= \frac{1}{2}\sum_{\substack{m\ge 1\\ n\in\mathbb{Z}}}\Bigg(&\frac{1}{m^2(24m^2+6mn+n^2)}-\frac{4}{m^2(6m^2+3mn+n^2)}\\
&\qquad -\frac{3}{m^2(8m^2+6mn+3n^2)}+\frac{12}{m^2(2m^2+3mn+3n^2)}\Bigg).
\end{align*}
Then we use the Poisson summation formula to derive the following identities:
\begin{align*}
\sum_{\substack{m\ge 1\\ n\in\mathbb{Z}}}\frac{1}{m^2(24m^2+6mn+n^2)} &= \frac{\pi}{\sqrt{15}}\sum_{\substack{m\ge 1\\ n\in\mathbb{Z}}}\frac{e^{2\pi i m|n|\sqrt{-15}}}{m^3},\\
\sum_{\substack{m\ge 1\\ n\in\mathbb{Z}}}\frac{1}{m^2(6m^2+3mn+n^2)} &= \frac{2\pi}{\sqrt{15}}\sum_{\substack{m\ge 1\\ n\in\mathbb{Z}}}\frac{e^{2\pi i m|n|\left(\frac{\sqrt{-15}}{2}+\frac{1}{2}\right)}}{m^3},\\
\sum_{\substack{m\ge 1\\ n\in\mathbb{Z}}}\frac{1}{m^2(8m^2+6mn+3n^2)} &= \frac{\pi}{\sqrt{15}}\sum_{\substack{m\ge 1\\ n\in\mathbb{Z}}}\frac{e^{2\pi i m|n|\frac{\sqrt{-15}}{3}}}{m^3},\\
\sum_{\substack{m\ge 1\\ n\in\mathbb{Z}}}\frac{1}{m^2(2m^2+3mn+3n^2)} &= \frac{2\pi}{\sqrt{15}}\sum_{\substack{m\ge 1\\ n\in\mathbb{Z}}}\frac{e^{2\pi i m|n|\left(\frac{\sqrt{-15}}{6}+\frac{1}{2}\right)}}{m^3}.
\end{align*}
By \eqref{E:Fs}, we have 
\[\sum_{\substack{m\ge 1\\ n\in\mathbb{Z}}}\frac{e^{2\pi i m|n|\tau}}{m^3}=2F_3(\tau)+\zeta(3),\]
so it follows from \eqref{E:F3d} that
\[T=\frac{\pi^4}{15}=6\zeta(4).\]
Plugging the value of $T$ into \eqref{E:I1in3}, we obtain $S=11\zeta(4),$ as desired.\\
\indent Now we finish the proof by considering the term on the right-hand side of \eqref{E:I1in}. We first rewrite $\varpi_2(\tau)$ in terms of the Weber function $f_2(\tau)$ defined in Section~\ref{S:eta}:
\begin{equation}\label{E:varpi2}
\varpi_2(\tau)= 4\left(\frac{\eta(\tau)\eta(3\tau)}{f_2(\tau)f_2(3\tau)}\right)^2.
\end{equation}
Since $\varpi_2(\tau_2)$ is real, it follows from \eqref{E:eta15a}, \eqref{E:eta15b}, and \eqref{E:f2a} that
\begin{equation}\label{E:varpi2t2}
\varpi_2(\tau_2)=\mfrac{1}{10\sqrt{3}\pi^3}\Gamma\left(\mfrac{1}{15}\right)\Gamma\left(\mfrac{2}{15}\right)\Gamma\left(\mfrac{4}{15}\right)\Gamma\left(\mfrac{8}{15}\right).
\end{equation}
In the final step, we use a result of Rogers, Wan, and Zucker \cite[Thm. 5]{RWZ}:
\begin{equation*}
L(f,2)=\frac{\Gamma\left(\mfrac{1}{15}\right)\Gamma\left(\mfrac{2}{15}\right)\Gamma\left(\mfrac{4}{15}\right)\Gamma\left(\mfrac{8}{15}\right)}{120\sqrt{3}\pi}
\end{equation*}
to complete the proof.
\end{proof}

\begin{remark}
Applying \eqref{E:Q} to the summation in \eqref{E:I1in2} directly and doing a full partial fraction decomposition lead to some (potentially) interesting identities for the \textit{cotangent Dirichlet series}
\begin{equation*}
\xi_s(\tau)=\sum_{n=1}^\infty \frac{\cot(\pi n \tau)}{n^s},
\end{equation*}
where $s$ is odd. More precisely, \eqref{E:I1in2} is equivalent to
\begin{equation*}
3\xi_3\left(\frac{\sqrt{-15}}{6}\right)-\xi_3\left(\frac{\sqrt{-15}}{2}\right)-24\xi_3\left(\frac{\sqrt{-15}}{12}\right)+8\xi_3\left(\frac{\sqrt{-15}}{4}\right) = \frac{2\pi^3}{\sqrt{15}}i.
\end{equation*}
Using a trigonometric identity, we can rewrite this identity as:
\begin{align*}
3\sum_{\substack{n\ge 1\\n \text{ odd }}}\frac{\tan\left(\pi n \frac{\sqrt{-15}}{6}\right)}{n^3}-\sum_{\substack{n\ge 1\\n \text{ odd }}}\frac{\tan\left(\pi n \frac{\sqrt{-15}}{2}\right)}{n^3}=\frac{\pi^3}{4\sqrt{15}}i.
\end{align*}
The study of trigonometric Dirichlet series has a long history dating back to Ramanujan. Nevertheless, very little is known about their special values, especially at imaginary quadratic irrationalities. For more details about trigonometric Dirichlet series, we refer the reader to \cite{Berndt78} and \cite{Str}.
\end{remark}

\begin{proof}[Proof of Theorem~\ref{T:main2}]
We prove \eqref{E:I2} and \eqref{E:I3} using similar arguments. The CM points involved include
\begin{equation*}
\tau_1=\frac{\sqrt{-3}}{3}, \qquad \tau_2=\frac{-3+\sqrt{-3}}{12}, \qquad \tau_3=\frac{\sqrt{-3}}{6}, \qquad \tau_4=\frac{3+\sqrt{-3}}{6}.
\end{equation*}
Observe that we can rewrite $t(\tau)$ in terms of $f_2(\tau)$:
\[t(\tau)=-\left(\frac{2}{f_2(\tau)f_2(3\tau)}\right)^6.\]
Then, using well-known transformations for $f_0,f_1,$ and $f_2$ under $\tau\rightarrow \tau+1$ and $\tau\rightarrow -1/\tau$ \cite{YZ} together with \eqref{E:f0a}, it is not hard to show that $ t(\tau_1)=-32, t(\tau_2)=4, t(\tau_3)=-2, t(\tau_4)=16,$ and \[\left(\mfrac{1-\sqrt{-3}}{2}\right)\varpi_2(\tau_2)=\varpi_2(\tau_4)=4\varpi_2(\tau_3)=2\varpi_2(\tau_1).\]

For a positive integer $N$, let $w_N$ denote the Fricke involution
\begin{equation*}
w_N = \begin{pmatrix}
0 & \frac{1}{\sqrt{N}}\\
-\sqrt{N} & 0
\end{pmatrix},
\end{equation*}
acting on $\mathfrak{H}$ by fractional linear transformation. Then we have $\tau_3= w_6 \tau_1$ and $\tau_4= w_6\tau_2.$ In addition, the transformation
\[\eta(m w_N\tau)=\sqrt{-\left(\frac{iN}{m}\right)\tau}\cdot \eta\left(\frac{N}{m}\tau\right), \qquad m\in\mathbb{N},\]
yields
\begin{equation}\label{E:w6}
\varpi_1(w_6\tau)=-\frac{3}{4}\tau^2 \varpi_2(\tau).
\end{equation}
\indent Therefore, due to \eqref{E:Imod}, \eqref{E:w6}, and some manipulations similar to those used in \cite[\S 2.5]{BKV}, we have
\begin{align*}
I(16)&=\varpi_2(\tau_4)\left(\frac{2\sqrt{3}\pi^3}{9}-\frac{\sqrt{3}}{64\pi}\sum_{\substack{m\ge 1\\ n\ne 0}}\frac{\psi(n)}{n^2}\frac{1}{3m^2+3mn+n^2}\right),\\
I(4)&=\varpi_2(\tau_4)\left(\frac{2\sqrt{3}\pi^3}{9}-\frac{\sqrt{3}}{4\pi}\sum_{\substack{m\ge 1\\ n\ne 0}}\frac{\psi(n)}{n^2}\frac{1}{12m^2+6mn+n^2}\right),\\
I(-2)&= \varpi_2(\tau_4)\left(\frac{\sqrt{3}\pi^3}{18}+\frac{\sqrt{3}}{4\pi}\sum_{\substack{m\ge 1\\ n\ge 1}}\frac{\psi(n)}{n^2}\frac{1}{12m^2+n^2}\right),\\
I(-32)&= \varpi_2(\tau_4)\left(\frac{\sqrt{3}\pi^3}{18}+\frac{\sqrt{3}}{64\pi}\sum_{\substack{m\ge 1\\ n\ge 1}}\frac{\psi(n)}{n^2}\frac{1}{3m^2+n^2}\right).
\end{align*}
Again, with the aid of \eqref{E:Q}, the Poisson summation formula, and Lemma~\ref{L:F3}, we arrive at the following identities:
\begin{align*}
I(16)&=\varpi_2(\tau_4)\left(\frac{32\sqrt{3}\pi^3}{135}-2\zeta(3)-2\left(3F_3\left(\frac{\sqrt{-3}}{3}\right)-F_3(\sqrt{-3})\right)\right),\\
I(4)&=\varpi_2(\tau_4)\left(\frac{16\sqrt{3}\pi^3}{135}-\zeta(3)-\left(3F_3\left(\frac{\sqrt{-3}}{3}\right)-F_3(\sqrt{-3})\right)\right),\\
I(-2)&=\varpi_2(\tau_4)\left(\frac{23\sqrt{3}\pi^3}{540}+\frac{7}{4}\zeta(3)+\frac{7}{4}\left(3F_3\left(\frac{\sqrt{-3}}{3}\right)-F_3(\sqrt{-3})\right)\right),\\
I(-32)&=\varpi_2(\tau_4)\left(\frac{7\sqrt{3}\pi^3}{540}+2\zeta(3)+2\left(3F_3\left(\frac{\sqrt{-3}}{3}\right)-F_3(\sqrt{-3})\right)\right),
\end{align*}
from which it is obvious that
\begin{equation*}
I(16) = 2I(4)= 8(I(-2)-I(-32)) 
\end{equation*}
and that
\begin{equation*}
I(-32)+I(16) = \frac{\sqrt{3}\pi^3}{4}\varpi_2(\tau_4).
\end{equation*}
Since $\varpi_2(\tau_4)$ is real, it follows from \eqref{E:varpi2}, \eqref{E:eta3a}, \eqref{E:eta3b}, \eqref{E:f2b}, and \cite[Thm. 5]{RWZ} that
\[\varpi_2(\tau_4) = \frac{24}{2^{17/3}}\frac{\Gamma^6\left(\frac{1}{3}\right)}{\pi^4}=\frac{24}{\pi^2}L(g,2), \]
which gives \eqref{E:I2}.
\end{proof}

\section{Shioda-Inose structure for the family $X_t$} \label{S:SI}
$K3$ surfaces with large Picard number are well known as a class of varieties enriched with interesting arithmetic information, usually encoded in their associated $L$-functions. In order to gain more insight into the interplay between the Feynman integral $I(t)$ and special values of $L$-functions, we will investigate certain arithmetic properties of the family $X_t$. Recall from \cite{BKV} that, for all but finitely many $t\in\bar{\mathbb{Q}}$, resolving the singularities of the hypersurface $P_t(x_1,x_2,x_3):=(1+x_1+x_2+x_3)(1+x_1^{-1}+x_2^{-1}+x_3^{-1})-t=0$ yields a $K3$ surface with Picard number at least $19$. Therefore, by a result of Morrison \cite[Cor. 7.4]{Mor}, the family $X_t$ admits a Shioda-Inose structure; i.e., they fit into the following diagram:
\begin{center}
\begin{tikzpicture}
\draw [->](-1,1) -- (-0.5,0) ;
\draw [->] (1,1) -- (0.5,0) ;
\end{tikzpicture}
\put(-65,32.5){$X_t$}
\put(-15,32.5){$E_t\times E_t'$}
\put(-60,-15){$\Km(E_t\times E_t')$}
\end{center}
where $E_t$ and $E_t'$ are isogenous elliptic curves, $\Km(E_t\times E_t')$ is the Kummer surface for $E_t$ and $E_t'$, the two arrows indicate degree two rational maps, and the transcendental lattices of $X_t$ and $\Km(E_t\times E_t')$ are isometric. Moreover, the case when $X_t$ is singular occurs exactly when $E_t$ has complex multiplication. It is also known that the Picard-Fuchs equation of the family $X_t$ is the symmetric square of an order two ordinary linear Fuchsian differential equation, where the latter, up to a change of variables, is the Picard-Fuchs equation for a family of elliptic curves giving rise to a S-I structure on $X_t$ \cite{Dor,Lon}.

We have from \cite[\S 2.3]{BKV} that the holomorphic period
\[u(t)=\frac{1}{(2\pi i)^3}\int_{|x_1|=|x_2|=|x_3|=1}\frac{1}{(1+x_1+x_2+x_3)(1+x_1^{-1}+x_2^{-1}+x_3^{-1})-t}\, \frac{d x_1}{x_1}\frac{d x_2}{x_2}\frac{d x_3}{x_3}\]
satisfies the Picard-Fuchs equation $\mathcal{L}_t^3 u(t) =0,$ where 
\begin{equation}\label{E:L3}
\mathcal{L}_t^3 = t^2(t-4)(t-16)\frac{d^3}{d t^3}+6t(t^2-15t+32)\frac{d^2}{d t^2}+(7t^2-68t+64)\frac{d}{d t}+t-4.
\end{equation}
On the other hand, it is shown in \cite{SB} that the family of elliptic curves 
\[E_r: \quad xy-r(x+y+1)(xy+y+x)=0,\]
has Picard-Fuchs equation 
\begin{equation}\label{E:L2a}
r(r-1)(9r-1)\frac{d^2 v}{d r^2} + (27r^2-20r+1)\frac{d v}{d r} + (9r-3)v =0.
\end{equation}
By the change of variables $w=r^{1/2}v$ and $t-4=-(1-3r)^2/r,$ given in the proof of \cite[Lem. 10]{Ver}, we can transform \eqref{E:L2a} into
\[\mathcal{L}_t^2 w =0,\]
where 
\begin{equation}\label{E:L2b}
\mathcal{L}_t^2= t(t-4)(t-16)\frac{d^2}{d t^2}+2(t^2-15t+32)\frac{d}{d t}+\frac{t-8}{4}.
\end{equation}
By \cite[Prop. 11]{Ver} (or direct computation), $\mathcal{L}_t^3$ is the symmetric square of $\mathcal{L}_t^2,$ so the family $E_{r(t)},$ where $r(t)=(10-t+\sqrt{t^2-20t+64})/18,$ gives rise to the S-I structure for $X_t.$ When $X_t$ and $E_{r(t)}$ are defined over $\mathbb{Q}$, the symmetric square $L$-function of $E_{r(t)}$ is, up to simple factors, the $L$-function attached to the transcendental lattice $\T(X_t)$. In particular, when $X_t$ is singular, the latter becomes the $L$-function of a weight $3$ cusp form. A concrete example of these assertions is the following.
\begin{theorem}\label{T:Sym}
Let $E_{r}$ be an elliptic curve as defined above and let $g$ be the cusp form defined in Theorem~\ref{T:main2}. Then 
\begin{equation}\label{E:Sym}
L(\Sym^2 E_{-1/3},s)=L(\Sym^2 E_{1/3},s)=L(\chi_{-3},s-1)L(g,s),
\end{equation}
where $\chi_{-3}$ is the Dirichlet character associated to $\mathbb{Q}(\sqrt{-3}).$
\end{theorem}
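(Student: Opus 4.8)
The idea is to exploit the Shioda--Inose structure. For all but finitely many $t$ the identity $\mathcal{L}_t^3=\Sym^2\mathcal{L}_t^2$ of Picard--Fuchs operators lifts, on $\ell$-adic cohomology, to an isomorphism of $G_\QQ$-representations between the transcendental part of $H^2(X_t)$ and $\Sym^2 H^1(E_{r(t)})$, both pure of weight $2$ in the appropriate normalisation. I would specialise at the two singular parameters $t=4$, for which $r(t)=1/3$, and $t=16$, for which $r(t)=-1/3$ (both smooth points of the family $E_r$); the curves $E_{\pm 1/3}$ are then elliptic curves over $\QQ$ with complex multiplication, and I would first pin down the CM field. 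Since $E_{r(t)}$ has complex multiplication exactly when $X_t$ is singular, and since the transcendental representation of $X_4$ (resp.\ $X_{16}$) is, by Livn\'{e}'s modularity theorem \cite{Liv} together with the explicit identification in \cite{BKV}, the representation $\rho_g$ attached to $g=\eta^3(2\tau)\eta^3(6\tau)$, which has CM by $\QQ(\sqrt{-3})$, the curves $E_{\pm 1/3}$ must have CM by $K:=\QQ(\sqrt{-3})$. (Alternatively one can compute their $j$-invariants over $\QQ$ and recognise them as $\QQ(\sqrt{-3})$-CM values.)

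Next, write $H^1(E_{\pm1/3})\cong\mathrm{Ind}_{G_K}^{G_\QQ}\chi$, where $\chi$ is the $\ell$-adic Galois character attached to the Hecke character of $E_{\pm1/3}$ of infinity type $(1,0)$, and let $\sigma$ generate $\Gal(K/\QQ)$. A standard computation with induced representations --- the projection formula gives $\mathrm{Ind}_{G_K}^{G_\QQ}\chi\otimes\mathrm{Ind}_{G_K}^{G_\QQ}\chi\cong\mathrm{Ind}_{G_K}^{G_\QQ}(\chi^2)\oplus\mathrm{Ind}_{G_K}^{G_\QQ}(\chi\chi^\sigma)$, and the $\sigma$-invariant character $\chi\chi^\sigma$ extends to $G_\QQ$ --- yields, after removing the line $\Lambda^2 H^1(E_{\pm1/3})=\det H^1(E_{\pm1/3})\cong\QQ_\ell(-1)$,
\[
\Sym^2\!\left(\mathrm{Ind}_{G_K}^{G_\QQ}\chi\right)\;\cong\;\mathrm{Ind}_{G_K}^{G_\QQ}(\chi^2)\ \oplus\ \bigl(\QQ_\ell(-1)\otimes\chi_{-3}\bigr),
\]
since both extensions of $\chi\chi^\sigma$ restrict to $\QQ_\ell(-1)$ on $G_K$ and exactly one of them is $\det H^1$. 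Passing to $L$-functions, the one-dimensional summand contributes $L(\QQ_\ell(-1)\otimes\chi_{-3},s)=L(\chi_{-3},s-1)$, while the two-dimensional summand contributes $L(\mathrm{Ind}_{G_K}^{G_\QQ}(\chi^2),s)=L(\chi^2,s)=L(\theta_{\chi^2},s)$, the CM $L$-function of the weight-three newform $\theta_{\chi^2}=\sum_{\mathfrak a}\chi^2(\mathfrak a)q^{N\mathfrak a}$, which has nebentypus $\chi_{-3}$.

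It then remains to identify $\theta_{\chi^2}$ with $g$. Here I would observe that $\mathrm{Ind}_{G_K}^{G_\QQ}(\chi^2)$ is precisely the rank-two transcendental representation $\T(X_t)$ of the singular surface $X_4$ (resp.\ $X_{16}$): the jump of the Picard number from $19$ to $20$ is accounted for exactly by the line $\QQ_\ell(-1)\otimes\chi_{-3}$ becoming algebraic over $K$. Since that transcendental representation is $\rho_g$ by \cite{BKV}, we get $L(\theta_{\chi^2},s)=L(g,s)$; alternatively, one checks that $\theta_{\chi^2}$ has level $3\,N(\mathfrak f_{\chi^2})=12$ and nebentypus $\chi_{-3}$ and agrees with $g$ on enough Hecke eigenvalues up to the Sturm bound. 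The argument produces the same two Euler products for $t=4$ and $t=16$, so the equality $L(\Sym^2 E_{1/3},s)=L(\Sym^2 E_{-1/3},s)$ drops out simultaneously (one can also note directly that $\Sym^2$ is insensitive to quadratic twists). I expect the genuine work to be this last identification, together with the bookkeeping of the bad Euler factors at $2$ and $3$ and the precise Tate-twist normalisation, so that the asserted identity holds for the complete $L$-functions; granting the modularity input from \cite{BKV}, the representation-theoretic decomposition and the CM theory are essentially formal.
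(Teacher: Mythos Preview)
Your argument is essentially correct and reaches the same factorisation, but the route differs from the paper's. The paper proceeds concretely: it identifies $E_{1/3}$ with the conductor-$36$ curve $y^2=x^3+1$, writes down its Gr\"{o}ssencharacter $\varphi$ explicitly on $\mathbb{Z}[(1+\sqrt{-3})/2]$, invokes \cite[Prop.~5.1]{CS} for the factorisation $L(\Sym^2 E,s)=L(\chi_{-3},s-1)\,L(\varphi^2,s)$, and then matches the theta series $\sum_{\mathfrak a}\varphi^2(\mathfrak a)q^{N\mathfrak a}=\tfrac12\sum(m^2-3n^2)q^{m^2+3n^2}$ with $g$ via a known identity \cite[Lem.~2.3]{Sam}. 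The first equality is obtained by observing that $E_{-1/3}$ and $E_{1/3}$ are isogenous. Your approach replaces the Coates--Schmidt citation by the underlying induced-representation computation (which is exactly what their proposition packages), and replaces the explicit theta-series identification by an appeal to the Shioda--Inose structure plus modularity of $X_4$ and $X_{16}$. The paper's version has the advantage of being self-contained and handling the bad Euler factors automatically through the explicit character; yours makes the geometric origin of the factorisation transparent.

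One caution: your primary route to $\theta_{\chi^2}=g$ cites \cite{BKV} for the identification of the transcendental representation of $X_4$ (resp.\ $X_{16}$) with $\rho_g$, but \cite{BKV} carries this out only for $t=1$ (with the level-$15$ form $f$), not for $t\in\{4,16\}$. So as written that step is not sourced; you would need either to run the Livn\'{e}-style argument for these $t$ separately, or---cleaner---fall back on your Sturm-bound alternative, which does work once you know the level is $12$ and the nebentypus is $\chi_{-3}$. Similarly, ``$\Sym^2$ is insensitive to quadratic twists'' is true, but you have not checked that $E_{-1/3}$ is a quadratic twist of $E_{1/3}$; the paper instead notes they are isogenous, which is the more direct statement here.
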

\begin{proof}
Let $K=\mathbb{Q}(\sqrt{-3})$. Note first that $E_{1/3}$ is isomorphic to the conductor $36$ elliptic curve 
\[y^2=x^3+1,\]
which has complex multiplication by $\mathcal{O}_K=\mathbb{Z}[(1+\sqrt{-3})/2]$. 
Let $\Lambda = (3+\sqrt{-3})\subset \mathcal{O}_K$ and let $P(\Lambda)$ be the set of integral ideals of $\mathcal{O}_K$ coprime to $\Lambda.$ Define $\varphi:P(\Lambda)\rightarrow \mathbb{C}^{\times}$ by \[\varphi((m+n\sqrt{-3}))=\chi_{-3}(m)(m+n\sqrt{-3}),\qquad m,n\in\mathbb{Z},\quad m>0.\] Then $\varphi$ extends multiplicatively to a Gr\"{o}ssencharacter on the group $I(\Lambda)$ of fractional ideals of $\mathcal{O}_K$ coprime to $\Lambda.$ Hence, by \cite[Thm. 1.31]{Ono},
\[\psi(q)=\sum_{\mathfrak{a}\in P(\Lambda)}\varphi(\mathfrak{a})q^{N(\mathfrak{a})}\]
is a newform in $\mathrm{S}_2(\Gamma_0(36))$. By a simple manipulation, we have 
\[\psi(q) = \frac{1}{2}\sum_{m,n\in\mathbb{Z}}m\chi_{-3}(m)q^{m^2+3n^2}= q-4q^7+2q^{13}+O(q^{19}).\]
Since there is only one isogeny class in conductor $36$, $\varphi$ is indeed the Gr\"{o}ssencharacter of the elliptic curve $E_{1/3}.$ Now let 
\[\Psi(q)=\sum_{\mathfrak{a}\in P(\Lambda)}\varphi^2(\mathfrak{a})q^{N(\mathfrak{a})},\]
where $\varphi^2$ is the primitive Gr\"{o}ssencharacter attached to the square of $\varphi$. Then $\Psi(q)$ is expressible as
\[\Psi(q)=\frac{1}{2}\sum_{m,n\in\mathbb{Z}}(m^2-3n^2)q^{m^2+3n^2},\]
which coincides with the cusp form $g$ \cite[Lem. 2.3]{Sam}. Now the second equality in \eqref{E:Sym} follows immediately from \cite[Prop. 5.1]{CS}, while the first equality comes from the fact that the curves $E_{-1/3}$ and $E_{1/3}$ are isogenous.
\end{proof}
Observe that the values $r=1/3$ and $r=-1/3$ correspond to $t=4$ and $t=16$.  As a consequence of Theorem~\ref{T:Sym}, we can rewrite \eqref{E:I2} in terms of a symmetric square $L$-value.
\begin{corollary}\label{C:Sym}
We have
\begin{equation*}
I(-32)+2I(4)=I(-32)+I(16)=54L(\Sym^2 E_{1/3},2).
\end{equation*}
\end{corollary}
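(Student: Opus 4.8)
The plan is to reduce the corollary to the two identities of Theorem~\ref{T:main2}, the factorization of Theorem~\ref{T:Sym}, and a classical evaluation of a Dirichlet $L$-value. First I would dispose of the first equality $I(-32)+2I(4)=I(-32)+I(16)$: this is immediate from the relation $I(16)=2I(4)$ contained in \eqref{E:I3}, so nothing beyond Theorem~\ref{T:main2} is needed for it.

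For the second equality I would start from \eqref{E:I2}, namely $I(-32)+I(16)=\frac{36\pi}{\sqrt{12}}L(g,2)$; rewriting $\sqrt{12}=2\sqrt3$ puts this in the form $I(-32)+I(16)=6\sqrt3\,\pi\,L(g,2)$. Next I would specialize the identity \eqref{E:Sym} of Theorem~\ref{T:Sym} at $s=2$, obtaining $L(\Sym^2 E_{1/3},2)=L(\chi_{-3},1)\,L(g,2)$. Finally I would insert the standard closed form $L(\chi_{-3},1)=\frac{\pi}{3\sqrt3}$ (equivalently $\frac{\pi\sqrt3}{9}$), which is the analytic class number formula for $\mathbb{Q}(\sqrt{-3})$ (class number $1$, six units, discriminant $-3$), and combine: $54\,L(\Sym^2 E_{1/3},2)=54\cdot\frac{\pi}{3\sqrt3}\,L(g,2)=\frac{18\pi}{\sqrt3}\,L(g,2)=6\sqrt3\,\pi\,L(g,2)=I(-32)+I(16)$, as desired.

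The argument is purely formal once these inputs are assembled, so I do not expect a genuine obstacle; the only point demanding care is the bookkeeping of constants. Specifically one must check that the normalizations of $\Sym^2 E_{1/3}$ and of $g$ used in Theorem~\ref{T:Sym} are exactly those appearing in Theorem~\ref{T:main2}, and that the shifted factor $L(\chi_{-3},s-1)$ really contributes $L(\chi_{-3},1)=\pi/(3\sqrt3)$ at $s=2$ (and not a value off by a power of the conductor). Once these match, the numerical factor $54$ falls out precisely, since $6\sqrt3\big/(\pi/(3\sqrt3))=18\cdot 3=54$.
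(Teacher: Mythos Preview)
Your proposal is correct and is exactly the intended argument: the paper states the corollary as an immediate consequence of Theorem~\ref{T:Sym} and \eqref{E:I2} (together with \eqref{E:I3} for the first equality), and your computation simply supplies the elementary check that $\frac{36\pi}{\sqrt{12}}=54\,L(\chi_{-3},1)$ via the class number formula $L(\chi_{-3},1)=\pi/(3\sqrt3)$.
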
 

\section{Final remarks} \label{S:conclude}
In the final section, we give some possible directions for future research. First, due to the fact that the integral $I(t)$ arises from a computation in physics, it would be interesting to understand a physical interpretation of the identities \eqref{E:I2} and \eqref{E:I3}. On the other hand, one might be tempted to find other identities of similar type for $I(t)$. When  $t =64,8,-8,$ the $K3$ surface $X_t$ is singular and the order of complex multiplication corresponding to $t$ has discriminant $D=-15,-8,-24,$ respectively. In theory, for each $t$ given above, one can evaluate $I(t)$ at the corresponding CM point $\tau$ using \eqref{E:Imod} and identify the term $\varpi_2(-1/6\tau)$ with a critical $L$-value of some weight $3$ cusp form using \cite[Thm. 5]{RWZ}. However, following the argument in the proofs of our main theorems, we also have in the expression of $I(t)$ a combination of special values of the function $F_3$, whose explicit evaluations are not known in general. For instance, if $\tau=(-3+\sqrt{-15})/6$, then we have $t(\tau)=64$. Note that the integral $I(t)$ may not converge when $t>16$. If we consider the expression \eqref{E:Imod} as an analytic continuation of $I(t)$, then we find that 
\[\Re(I(64))=\frac{L(f,2)}{8\pi^2}\left(\frac{43\sqrt{15}\pi^3}{15}-45\zeta(3)-45\left(3F_3\left(\frac{\sqrt{-15}}{3}\right)-F_3(\sqrt{-15})\right)\right).\] 

One might also consider finding a direct relationship between the integral $I(t)$ and a critical value of the symmetric square $L$-function of $E_{r(t)}$, as suggested by Theorem~\ref{T:Sym} and Corollary~\ref{C:Sym}. Unfortunately, for all but finitely many values of $t$, $r(t)$ is irrational and the theory of symmetric square $L$-functions of elliptic curves defined over number fields has not yet been completely established. (For instance, the elliptic curve $E_{r(1)}$ is defined over $\mathbb{Q}(\sqrt{5})$.) Goncharov briefly explained the connection between Feynman integrals and special values of symmetric power $L$-functions from a polylogarithmic point of view in \cite[\S 3.5.7]{Gon}.

We close this article by pointing out some parallel results on another object, the Mahler measure of the polynomial $P_t$, which is defined analogously to the integral $I(t)$. Indeed, we first discovered the identities \eqref{E:I2} and \eqref{E:I3} from our numerical computation based on some known results on Mahler measures. Recall that, for a nonzero Laurent polynomial $P\in\mathbb{C}[x_1^{\pm 1},\ldots,x_n^{\pm 1}]$, the (logarithmic) Mahler measure of $P$ is 
\[m(P)=\int_0^1\cdots \int_0^1 \log |P(e^{2\pi i \theta_1},\ldots,e^{2\pi i \theta_n})|\,d\theta_1\cdots d\theta_n.\]
It has been proved that the Mahler measures of certain three-variable polynomials are expressible in terms of non-critical $L$-values. For example, Bertin \cite{Ber06a,Ber06b} proved that, for some integral values of $t$ which are corresponding to singular $K3$ surfaces, the Mahler measures of 
\begin{align*}
Q_t:&=x_1+x_1^{-1}+x_2+x_2^{-1}+x_3+x_3^{-1}-t,\\
R_t:&=(1+x_1+x_2+x_3)(x_1x_2+x_2x_3+x_1x_3+x_1x_2x_3)-tx_1x_2x_3,
\end{align*}
are rational linear combinations of $\sqrt{N}L(h,3)/\pi^3$ and $\sqrt{D}L(\chi,2)/\pi$ for some newform $h\in \mathrm{S}_3(\Gamma_1(N))$ and some odd Dirichlet character $\chi$ of conductor $D$. By a substitution $(x_1,x_2,x_3)\rightarrow (x_1/x_3,x_3/x_2,x_2)$ in $m(R_t)$ and \cite[Lem. 7]{Smy}, we have that $m(P_t)=m(R_t),$ which we shall denote by $m(t)$. As a consequence, Bertin's formulas for $m(R_t)$ given in \cite[Thm. 1]{Ber06a} and \cite[\S 4.3]{Ber06b} become equivalent to 
\begin{align*}
m(16)=4m(4)&=\frac{48\sqrt{3}}{\pi^3}L(g,3),\\
m(4)&=2(m(-32)-2m(-2)),
\end{align*}
which are clearly reminiscent of the identities in Theorem~\ref{T:main2}. The author also showed further in \cite[Cor. 3.2]{Sam} that 
\begin{align*}
m(-32)&=\frac{48\sqrt{3}}{\pi^3}L(g,3)+\frac{4}{\pi}L(\chi_{-4},2),\\
m(-2)&=\frac{21\sqrt{3}}{\pi^3}L(g,3)+\frac{2}{\pi}L(\chi_{-4},2).
\end{align*}
These formulas lead us to believe that each integral $I(t)$ in Theorem~\ref{T:main2} can also be written in terms of a special $L$-value and some `meaningful' quantity. Given these examples and \eqref{E:Bro}, one should expect that $m(1)$ is also expressible in terms of $L(f,3).$ In contrast, by a result of Bertin \cite{Ber12}, we have
\[m(1)=\frac{6\sqrt{3}}{5\pi}L(\chi_{-3},2).\]
A non-critical $L$-value of $f$, on the other hand, conjecturally appears in the Mahler measure of a linear polynomial of four variables. 
\begin{conjecture}[F. Rodriguez Villegas \cite{BLVD}]
We have
\[m(1+x_1+x_2+x_3+x_4)\stackrel{?}=6\left(\frac{\sqrt{-15}}{2\pi i}\right)^5L(f,4).\]
\end{conjecture}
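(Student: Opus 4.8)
Since the displayed statement is a conjecture, what follows is a plan of attack rather than a proof; it runs parallel to the argument for Theorem~\ref{T:main}. Write $Q=1+x_1+x_2+x_3+x_4$. The first step is to pass from the $4$-torus to a one-dimensional integral. Since $m(1+x_1+x_2+x_3+x_4)=m(x_1+x_2+x_3+x_4+x_5)$ (divide by $x_5$ and use $\log|x_5|=0$ on the torus), and since on $\mathbb{T}^5$ the quantity $|x_1+\cdots+x_5|$ is the length of a sum of five unit vectors with independent uniform directions, one obtains $m(Q)=W_5'(0)$, where $W_5(s)=\int_{[0,1]^5}\bigl|\sum_{j=1}^5 e^{2\pi i\theta_j}\bigr|^{s}\,d\theta_1\cdots d\theta_5$ is the $s$-th moment of the $5$-step uniform random walk; equivalently $m(Q)=\int_0^\infty(\log r)\,p_5(r)\,dr$, where $p_5$ is the radial density of that walk. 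This places the problem in exactly the framework of Borwein, Straub, Wan and Zucker \cite{BSWZ}: one would take their closed form for $p_5$ (a Meijer $G$-function, equivalently a combination of ${}_7F_6$'s) and the associated moment recursions as the input for evaluating $W_5'(0)$.

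The field $\mathbb{Q}(\sqrt{-15})$ enters for the same reason as in the proof of Theorem~\ref{T:main}: the density $p_5$ is annihilated by a Fuchsian operator closely tied to the Picard--Fuchs equation $\mathcal{L}_t^3=\Sym^2\mathcal{L}_t^2$ of the $K3$ pencil $X_t$ of Section~\ref{S:SI}, and the singular fibre relevant here is $X_1$, whose transcendental motive is the one attached to the weight-$3$ level-$15$ newform $f$. Accordingly, one expects the computation to produce $W_5'(0)$ as an algebraic multiple of the Chowla--Selberg period $\Gamma(1/15)\Gamma(2/15)\Gamma(4/15)\Gamma(8/15)$ --- the very $\Gamma$-product evaluated in Lemma~\ref{L:eta} at CM points of $\mathbb{Q}(\sqrt{-15})$ --- times one genuinely new transcendental number; the content of the conjecture is the identification of that number with $L(f,4)$, together with the rational constant.

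That the new transcendental is precisely $L(f,4)$ is the expectation from the regulator philosophy of Deninger and Rodriguez Villegas: $m(Q)$ is, up to a power of $2\pi i$, a Beilinson regulator of the Milnor symbol $\{x_1,x_2,x_3,x_4\}$, and --- as the link between $p_5$ and the level-$15$ form already indicates --- the motive governing its transcendental part is $M(f)$, the transcendental motive of $X_1$. What makes an unconditional proof conceivable is that $f$ has complex multiplication: it is attached to a Hecke character $\psi$ of $K=\mathbb{Q}(\sqrt{-15})$ of infinity type $z\mapsto z^2$, so $L(f,s)=L(\psi,s)$, and, since $s=-1$ is a trivial zero of $L(f,s)$, the functional equation rephrases the claim as a statement about $L'(\psi,-1)$, where Beilinson's conjecture for $L$-functions of Hecke characters of imaginary quadratic fields is a theorem of Deninger in the range that is needed. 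The period normalization --- hence the exact power of $2\pi i$ --- would then be fixed by comparison with the evaluation $L(f,2)=\Gamma(1/15)\Gamma(2/15)\Gamma(4/15)\Gamma(8/15)/(120\sqrt{3}\,\pi)$ of \cite[Thm.~5]{RWZ}, exactly as in the last step of the proof of Theorem~\ref{T:main}.

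The principal obstacle is the explicit closed-form evaluation of $W_5'(0)$ itself. Deninger's theorem, like Blasius' theorem invoked in Theorem~\ref{T:main}, yields the relevant $L$-value only up to an \emph{a priori} undetermined rational factor, so extracting the constant $6$ --- equivalently the factor $\left(\frac{\sqrt{-15}}{2\pi i}\right)^5$ --- requires actually integrating $(\log r)\,p_5(r)$ in closed form, or computing the corresponding regulator integral directly. This is the analogue, one ``weight'' higher, of the Eichler-integral identity \eqref{E:F3d} that drives the proof of Theorem~\ref{T:main}, and no such evaluation is presently known; I would expect it, rather than the motivic bookkeeping, to be where the real difficulty lies.
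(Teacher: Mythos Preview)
The paper does not prove this statement: it is explicitly labeled a \emph{conjecture}, attributed to Rodriguez Villegas and marked with $\stackrel{?}=$, and the surrounding text only motivates why $f$ should appear (via the singular $K3$ surface isomorphic to $X_1$ arising from the complete intersection $\{1+\sum x_i=0\}\cap\{1+\sum x_i^{-1}=0\}$). There is therefore no proof in the paper to compare against, and you correctly recognize this, offering a research outline rather than a proof.

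Your outline is sensible and well aligned with the paper's toolkit---the identification $m(Q)=W_5'(0)$, the reference to \cite{BSWZ}, the Chowla--Selberg period for $\mathbb{Q}(\sqrt{-15})$, and the CM nature of $f$ are all to the point. Two places merit care. First, the Fuchsian operator annihilating $p_5$ has order four, not three, so the tie to $\mathcal{L}_t^3=\Sym^2\mathcal{L}_t^2$ is looser than you imply; the route to $X_1$ in the paper goes through the complete intersection in $\mathbb{P}^4$, not through the one-parameter $X_t$ pencil, and you should not expect the $t=1$ fibre to govern $p_5$ directly. Second, $L(f,4)$ is a \emph{non-critical} value: under the functional equation it corresponds to the trivial zero at $s=-1$, so what enters is $L'(f,-1)$ (equivalently $L'(\psi,-1)$), and the relevant input is Deninger's theorem on Beilinson's conjecture for Hecke characters at the first non-critical point---a different regime from the Blasius critical-value result used for Theorem~\ref{T:main}. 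You flag this, but it is worth emphasizing that this is precisely why the identity remains conjectural: even granting Deninger's result up to $\mathbb{Q}^\times$, the exact constant $6$ has not been pinned down.
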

\noindent The (desingularized) variety $1+x_1+x_2+x_3+x_4=0$ is not a surface, but the complete intersection of this hypersurface and $1+x_1^{-1}+x_2^{-1}+x_3^{-1}+x_4^{-1}=0$ is compactified to a singular $K3$ surface, whose $L$-function is $L(f,s)$ \cite{PTV}. In fact, by a transformation given in \cite[\S 7.1.1]{BKV}, it can be seen that this $K3$ surface is isomorphic to $X_1$. 

Let us elucidate a link between $I(t)$ and $m(t)$ here. Recall from \cite[Thm. 2.2.1]{BKV} that $I(t)$ satisfies the nonhomogeneous differential equation $\mathcal{L}_t^3 I(t) =-24,$ while the derivative $m'(t)$ is a solution of the associated homogeneous equation. Hence, parametrizing $t$ by \eqref{E:t} we can write $m'(t)$ in terms of $\varpi_1(\tau).$ We also have from the proofs of \cite[Thm. 2.3.2]{BKV} and \cite[Thm. 1.1]{Ber06b} that
\begin{align*}
I(t(\tau))&= \varpi_1(\tau)\left(40\pi^2 \tau^2+\frac{1}{2}\int_1^q\left(\log \frac{\hat{q}}{q}\right)^2\sigma(\hat{q})d\log\hat{q}\right),\\
m(t(\tau)) &= \frac{1}{24}\int_1^q \sigma(\hat{q})d\log \hat{q},
\end{align*}
where $\sigma(q)=\frac{1}{5}(-E_4(\tau)+16E_4(2\tau)+9E_4(3\tau)-144E_4(6\tau)).$ 

The connection between Feynman integrals and Mahler measures seems even more obscure in the two-variable cases. For instance, consider the so-called sunset integral
\[J(t)=\int_{x_1,x_2\ge 0} \frac{1}{(1+x_1+x_2)(1+x_1^{-1}+x_2^{-1})-t}\,\frac{d x_1}{x_1}\frac{d x_2}{x_2},\]
studied in \cite{BV}. Note that when $t\ne 0$ the zero locus of $S_t:=(1+x_1+x_2)(1+x_1^{-1}+x_2^{-1})-t$ is the curve $E_{1/t}$ introduced in Section~\ref{S:SI}, but, to our knowledge, no $J(t)$ is known to be related to an $L$-value of $E_{1/t}.$ (This may be regarded as a part of Broadhurst's remark in \cite{Bro14}: ``The absence of weight $2$ examples is remarkable: does quantum field theory avoid Birch and Swinnerton-Dyer?''.) On the other hand, Rogers \cite{Rog} verified numerically that for many integral values of $t$, $m(S_t)$ is a rational multiple of $L(E_{1/t},2)/\pi^2.$ For example, he found that 
\begin{equation*}
m(S_8)=6m(S_2)=\frac{3}{5}m(S_{-7})=\frac{21}{\pi^2}L(E,2),
\end{equation*}
where $E$ is an elliptic curve of conductor $14$, which have been successfully proved by Mellit \cite{Mellit}. Despite the absence of weight $2$ evidence for $J(t)$, we have found from our numerical computation that \[J(8)\stackrel{?}=2J(2).\] (Curiously, the integral $J(-7)$ does not appear to be a rational multiple of the other two.) In summary, it would be interesting to gain deeper understanding about how Feynman integrals and Mahler measures are related.\\

\noindent\textbf{Acknowledgements}\\
The author would like to express his gratitude to Scott Ahlgren for many helpful discussions and insightful suggestions on an earlier version of this manuscript. The author would also like to thank Matt Kerr for answering his question about proof details in \cite{BKV} and Armin Straub for bringing relevant results on short uniform random walks \cite{BSWZ} to his attention. Last but not least, the author is grateful to the referees for directing him to Mellit's result \cite{Mellit} and other valuable suggestions which help to improve the exposition of this paper.

\bibliographystyle{amsplain}

\begin{thebibliography}{99}
\bibitem {BBBG} D. H. Bailey, J. M. Borwein, D. Broadhurst and M. L. Glasser, \textit{Elliptic integral evaluations of Bessel moments and applications}, J. Phys. A \textbf{41} (2008), 205203.
\bibitem {Berndt78} B. C. Berndt, \textit{Analytic Eisenstein series, theta-functions, and series relations in the spirit of Ramanujan}, Journal f\"{u}r die reine und angewandte Mathematik \textbf{303/304} (1978), 332--365.
\bibitem {Berndt} B. C. Berndt, \textit{Ramanujan's Notebooks Part V}, Springer-Verlag, New York, NY, 1998.
\bibitem {Ber06a}  M. J. Bertin, \textit{Mahler’s measure and L-series of K3 hypersurfaces}, in \textit{Mirror Symmetry} V. AMS/IP Stud. Adv. Math., vol. 38, Amer. Math. Soc., Providence (2006), 3--18.
\bibitem {Ber06b} M. J. Bertin, \textit{Mesure de Mahler d'hypersurfaces $K3$}, J. Number Theory \textbf{128} (2008), 2890--2913.
\bibitem {Ber12} M. J. Bertin, \textit{A Mahler measure of a $K3$ surface expressed as a Dirichlet $L$-series}, Canad. Math. Bull. \textbf{55} (2012), 26--37.
\bibitem {Bla} D. Blasius, \textit{On the critical values of Hecke $L$-series}, Ann. of Math. (2) \textbf{124} (1986), 23--63.
\bibitem {BKV} S. Bloch, M. Kerr and P. Vanhove, \textit{A Feynman integral via higher normal functions}, Compositio Math. \texttt{doi:\,10.1112/S0010437X15007472}.
\bibitem {BV} S. Bloch and P. Vanhove, \textit{The elliptic dilogarithm for the sunset graph}, J. Number Theory \textbf{148} (2015), 328--364.
\bibitem {BSWZ} J. M. Borwein, A. Straub, J. Wan and W. Zudilin, \textit{Densities of short uniform random walks}, Canad. J. Math. \textbf{64} (2012), 961--990.
\bibitem {BLVD} D. Boyd, D. Lind, F. Rodriguez Villegas and C. Deninger, \textit{The many aspects of Mahler's measure}, final report of 2003 Banff workshop, \texttt{https://www.birs.ca/workshops/2003/03w5035/report03w5035.pdf}.
\bibitem {Bro} D. Broadhurst, \textit{Multiple zeta values and modular forms in quantum field theory}, in \textit{Computer algebra in quantum field theory} eds C. Schneider and J. Bl\"{u}mlein (Springer, Wien, 2013), 33-72.
\bibitem {Bro14} D. Broadhurst, \textit{Empirical evaluations of Feynman integrals as $L$-functions of modular forms with weight $>2$}, \texttt{http://www.ihes.fr/\~{}vanhove/SlidesQFT/Broadhurst\_{}IHES\_{}fevrier2014.pdf}.
\bibitem {CS}  J. Coates and C. G. Schmidt, \textit{Iwasawa theory for the symmetric square of an elliptic curve}, J. Reine Angew. Math. \textbf{375/376} (1987), 104-–156. 
\bibitem {Deligne} P. Deligne, \textit{Valeurs de Fonctions $L$ et P\'{e}riodes d'Int\'{e}grales}, Proc. Sympos. Pure Math. \textbf{33} (1979), 313--346; Part 2.
\bibitem {Dor}   C. F. Doran, \textit{Picard-Fuchs uniformization: modularity of the mirror map and mirror-moonshine}, The arithmetic and geometry of algebraic cycles (Banff, AB, 1998), Amer. Math. Soc., Providence, RI (2000), 257-–281.
\bibitem {Gon} A. B. Goncharov, \textit{Regulators}, in \textit{Handbook of K-theory} Vol. 1, 2, Springer, Berlin, (2005), 295–-349,
\bibitem {Gro}  E. Grosswald, \textit{Die Werte der Riemannschen Zetafunktion an ungeraden Argumentstellen}, Nachr. Akad. Wiss. Gottingen Math.-Phys. KI. \textbf{2} (1970) 9--13.
\bibitem {GMR} S. Gun, M. R. Murty and P. Rath, \textit{Transcendental values of certain Eichler integrals},
Bull. London Math. Soc. \textbf{43} (2011) 939--952.
\bibitem {Liv} R. Livn\'{e}, \textit{Motivic orthogonal two-dimensional representations of $\Gal(\overline{\mathbb{Q}}/\mathbb{Q})$}, Israel J. Math. \textbf{92} (1995), 149--156.
\bibitem {Lon} L. Long, \textit{On Shioda-Inose structure of one-parameter families of $K3$ surfaces}, J. Number Theory,
\textbf{109} (2004), 299–-318.
\bibitem {Mellit} A. Mellit, \textit{Elliptic dilogarithms and parallel lines}, arXiv: 1207.4722v1 [math.NT] (2012), 1--23.
\bibitem {Mor} D. R. Morrison, \textit{On $K3$ surfaces with large Picard number}, Invent. Math. \textbf{75} (1984), 105--121.
\bibitem {MW} H. Muzaffar and K. S. Williams, \textit{Evaluation of Weber's functions at quadratic irrationalities}, JP J. Algebra Number Theory Appl. \textbf{4} (2004), 209--259.
\bibitem {Ono} K. Ono, \textit{The web of modularity: arithmetic of the coefficients of modular forms and q-series}, BMS
Regional Conference Series in Mathematics, 102, Conference Board of the Mathematical Sciences, Washington, DC; Amererican Mathematical Society, Providence, RI, 2004.
\bibitem {PTV} C. Peters, J. Top and M. van der Vlugt, \textit{The Hasse zeta function of a $K3$ surface related to the number of words of weight 5 in the Melas codes}, J. Reine Angew. Math. \textbf{432} (1992), pp. 151–-176.
\bibitem {PW} A. van der Poorten and K. S. Williams, \textit{Values of the Dedekind eta function at quadratic irrationalities}, Canad. J. Math. \textbf{51} (1999), 176-224.
\bibitem {Rog} M. Rogers, \textit{Hypergeometric formulas for lattice sums and Mahler measures}, Intern. Math. Res. Notices. \textbf{17} (2011) 4027–-4058.
\bibitem {RWZ} M. Rogers, J. G. Wan and I. J. Zucker, \textit{Moments of elliptic integrals and critical $L$-values}, Ramanujan J. \texttt{doi:\,10.1007/s11139-014-9584-5}.
\bibitem {Sam} D. Samart, \textit{Three-variable Mahler measures and special values of modular and Dirichlet $L$-series}, Ramanujan J. \textbf{32} (2013), 245--268.
\bibitem {Smy} C. J. Smyth, \textit{An explicit formula for the Mahler measure of a family of $3$-variable polynomials}, J. Nombres Bordeaux, \textbf{14} (2002), 683--700.
\bibitem {SB} J. Stienstra and F. Beukers, \textit{On the Picard-Fuchs equation and the formal Brauer group of certain elliptic $K3$-surfaces}, Math. Ann. \textbf{271} (1985), 269--304.
\bibitem {Str} A. Straub, \textit{Special values of trigonometric Dirichlet series and Eichler integrals}, Ramanujan J. \texttt{doi:\,10.1007/s11139-015-9698-4}.
\bibitem {Ver} H. Verrill, \textit{Root lattices and pencils of varieties}, J. Math. Kyoto Univ. \textbf{36} (1996), 423--446.
\bibitem {YZ} N. Yui and D. Zagier, \textit{On the singular values of Weber modular functions}, Math. of Computation \textbf{66} (1997), 1645--1662.

\end{thebibliography}

\end{document}